\newtheorem{theorem}{Theorem}[section]
\newtheorem{lemma}[theorem]{Lemma}
\newtheorem{corollary}[theorem]{Corollary}
\newtheorem{proposition}[theorem]{Proposition}
\theoremstyle{remark}
\newtheorem{remark}[theorem]{Remark}
\newtheorem*{remark*}{Remark}
\theoremstyle{definition}
\newcommand\realp{\mathop{Re}}
\newcommand\dH{\,d{\mathcal H}^1}
\def\bR{\mathbb{R}}
\def\bC{\mathbb{C}}
\newcommand\cB{\mathcal{B}}
\newcommand\cA{\mathcal{A}}
\newcommand\cF{\mathcal{F}}
\newcommand\cS{\mathcal{S}}
\newcommand\cH{\mathcal{H}}
\newcommand\cV{\mathcal{V}}
\newcommand\bN{\mathbb{N}}
\newcommand{\PhiOmega}[1]{\Phi_\Omega(#1)}
\newcommand{\PhiOm}{\Phi_\Omega}
\numberwithin{equation}{section}
\begin{document}
	
\title[The Faber-Krahn inequality for the STFT]{The Faber-Krahn inequality for the Short-time Fourier transform}
\author{Fabio Nicola}
\address{Dipartimento di Scienze Matematiche, Politecnico di Torino, Corso Duca degli Abruzzi 24, 10129 Torino, Italy.}
\email{fabio.nicola@polito.it}
\author{Paolo Tilli}
\address{Dipartimento di Scienze Matematiche, Politecnico di Torino, Corso Duca degli Abruzzi 24, 10129 Torino, Italy.}
\email{paolo.tilli@polito.it}
\subjclass[2010]{49Q10, 49Q20, 49R05, 42B10, 94A12, 81S30}
\keywords{Faber-Krahn inequality, shape optimization, Short-time Fourier transform, Bargmann transform, uncertainty principle, Fock space}

\begin{abstract}
In this paper we solve an open problem concerning the characterization
of those measurable sets $\Omega\subset \bR^{2d}$ that, among
all sets having a prescribed 
Lebesgue measure, can trap the largest possible energy fraction in time-frequency space, where the energy density of a generic function $f\in L^2(\bR^d)$ is defined in terms of its 
Short-time Fourier transform (STFT) $\cV f(x,\omega)$, with Gaussian window. More precisely,
given a measurable set $\Omega\subset\bR^{2d}$  having measure $s> 0$, we prove that
the quantity
\[
\Phi_\Omega=\max\Big\{\int_\Omega|\cV f(x,\omega)|^2\,dxd\omega: f\in L^2(\bR^d),\ \|f\|_{L^2}=1\Big\},
\]
is largest possible if and only if $\Omega$ is equivalent, up to a negligible set,
to a ball of measure $s$, and in this case we
characterize all functions $f$ that achieve equality.

This result leads to a sharp uncertainty principle for the ``essential support" of the STFT
(when $d=1$, this can be summarized by the optimal bound $\Phi_\Omega\leq 1-e^{-|\Omega|}$, with
equality if and only if $\Omega$ is a ball).

Our approach, using techniques from measure theory after suitably rephrasing the problem in the Fock 
space, also leads to a local version of Lieb's uncertainty inequality for the STFT in $L^p$ 
when   $p\in [2,\infty)$, as well as to $L^p$-concentration estimates
when $p\in [1,\infty)$, thus proving a related conjecture. In all cases we identify the corresponding extremals.
\end{abstract}
\maketitle

\section{Introduction}
The notion of energy concentration for a function $f\in L^2(\bR)$ in the time-frequency plane is an issue of great theoretical and practical interest and can be formalised in terms of time-frequency distributions such as the so-called Short-time Fourier transform (STFT), defined as
\[
\cV f(x,\omega)=
\int_\bR e^{-2\pi i y\omega} f(y)\varphi(x-y)dy, \qquad x,\omega\in\bR,
\]
where $\varphi$ is the ``Gaussian window''
\begin{equation}
\label{defvarphi}
 \varphi(x)=2^{1/4}e^{-\pi x^2}, \quad x\in\bR,
\end{equation} 
normalized in such way that $\|\varphi\|_{L^2}=1$.
It is well known that $\cV f$ is a complex-valued, real analytic, bounded function and $\cV:L^2(\bR)\to L^2(\bR^2)$ is an isometry (see \cite{folland-book,grochenig-book,mallat,tataru}).

It is customary to interpret $|\cV f(x,\omega)|^2$ as the time-frequency energy density of  $f$ (see \cite{grochenig-book,mallat}). Consequently, the fraction of energy captured by a measurable subset $\Omega\subseteq \bR^2$ of a function $f\in L^2(\bR)\setminus\{0\}$ will be given by the Rayleigh quotient (see \cite{abreu2016,abreu2017,daubechies,marceca})
\begin{equation}\label{defphiomegaf} 
\PhiOmega{f}:= \frac{\int_\Omega |\cV f(x,\omega)|^2\, dxd\omega}{\int_{\bR^2} |\cV f(x,\omega)|^2\, dxd\omega}=\frac{\langle \cV^\ast \mathbbm{1}_\Omega \cV f,f\rangle}{\|f\|^2_{L^2}}.
\end{equation}
The bounded, nonnegative and self-adjoint operator $\cV^\ast \mathbbm{1}_\Omega \cV$ on $L^2(\bR)$ is known in the literature under several names, e.g. localization, concentration, Anti-Wick or Toeplitz operator, as well as time-frequency or time-varying filter. Since its first appearance in the works by Berezin \cite{berezin} and Daubechies \cite{daubechies}, the applications of such operators have been manifold and the related literature  is enormous: we refer to the books \cite{berezin-book,wong} and the survey \cite{cordero2007}, and the references therein, for an account of the main results.
\par

Now, when $\Omega$ has finite measure, $\cV^\ast \mathbbm{1}_\Omega \cV$ is a compact (in fact, trace class) operator. Its norm  $\|\cV^\ast \mathbbm{1}_\Omega \cV \|_{{\mathcal L}(L^2)}$,
given by  the quantity
\[
\PhiOm:=\max_{f\in L^2(\bR)\setminus\{0\}} \PhiOmega{f}
=
\max_{f\in L^2(\bR)\setminus\{0\}}
\frac{\langle \cV^\ast \mathbbm{1}_\Omega \cV f,f\rangle}{\|f\|^2_{L^2}},
\] 
represents the maximum fraction of energy that can in principle be trapped by
$\Omega$ for any signal $f\in L^2(\bR)$, and explicit
upper bounds for $\PhiOm$ are of considerable interest. Indeed, 
the analysis of the spectrum of $\cV^\ast \mathbbm{1}_\Omega \cV$ was initiated in the seminal paper \cite{daubechies} for radially symmetric $\Omega$, in which case the operator is diagonal in the basis of Hermite functions --and conversely \cite{abreu2012} if an Hermite function is an eigenfunction and $\Omega$ is simply connected then $\Omega$ is a ball centered at $0$-- and the asymptotics of the eigenvalues  (Weyl's law), in connection with the measure of $\Omega$, has been studied by many authors; again the literature is very large and we address the interested reader to the contributions \cite{abreu2016,abreu2017,demari,marceca,oldfield} and the references therein. 


The study of the time-frequency concentration of functions, 
in relation to uncertainty principles and under certain additional
constraints (e.g. on subsets of prescribed measure in phase space, 
or under limited bandwidth etc.) 
has a long history which,
as recognized by Landau and Pollak \cite{landau1961},
 dates back at least to Fuchs \cite{fuchs}, and its relevance
both to theory and applications has been well known 
 since the seminal works by
Landau-Pollack-Slepian, see e.g. \cite{folland,landau1985,slepian1983},
and other relevant contributions such as those of
Cowling and Price \cite{cowling}, Donoho and Stark \cite{donoho1989}, and  Daubechies \cite{daubechies}. 

However, in spite of the abundance
of deep and unexpected results related to this circle of ideas (see e.g.
the visionary work by Fefferman \cite{fefferman}) the question of characterizing
the subsets $\Omega\subset\bR^2$ of prescribed  measure, which allow for
the maximum concentration, is still open. 
In this paper we provide a complete
solution to this problem proving that the optimal sets are balls in phase space, and, in
dimension one, our result can be stated as follows (see Theorem \ref{thm mult} for 
the same result in arbitrary dimension).
\begin{theorem}[Faber-Krahn inequality for the STFT]\label{thm mainthm}
Among all measurable subsets $\Omega\subset \bR^2$ having a prescribed
(finite, non zero) measure,
the quantity
\begin{equation}
\label{eee}
\Phi_\Omega:=
\max_{f\in L^2(\bR)\setminus\{0\}}
\frac{\int_\Omega |\cV f(x,\omega)|^2\, dxd\omega}{\int_{\bR^2} |\cV f(x,\omega)|^2\, dxd\omega}
=
\max_{f\in L^2(\bR)\setminus\{0\}}
\frac{\langle \cV^\ast \mathbbm{1}_\Omega \cV f,f\rangle}{\|f\|^2_{L^2}}
\end{equation}
achieves its maximum if and only if $\Omega$ is equivalent, up to a set of measure zero,
to a ball.

Moreover, when $\Omega$ is a ball of center $(x_0,\omega_0)$, 
the only functions $f$ that achieve the maximum in \eqref{eee} are the functions of the kind
\begin{equation}
\label{optf}
f(x)=c\, e^{2\pi i \omega_0 x }\varphi(x-x_0),\qquad c\in\bC\setminus\{0\},
\end{equation}
that is, the scalar multiples of the Gaussian window $\varphi$ defined in \eqref{defvarphi}, translated and modulated
according to $(x_0,\omega_0)$. 
\end{theorem}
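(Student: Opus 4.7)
The plan is to transport the problem to the Bargmann--Fock picture and extract a sharp differential inequality for the distribution function of the energy density, whence the Faber--Krahn bound follows by a short integration argument. Via the Bargmann transform one has $|\cV f(x,\omega)|^2=|F(z)|^2 e^{-\pi|z|^2}$, where $z=x-i\omega$ and $F$ is an entire function with $\|F\|_{\cF^2}=\|f\|_{L^2}$ ($\cF^2$ being the Fock space of entire functions square-integrable against $e^{-\pi|z|^2}\,dA(z)$). Setting $u:=|F|^2 e^{-\pi|\cdot|^2}$ and normalizing $\|f\|_{L^2}=1$, the theorem reduces to proving
\[
\int_\Omega u\,dA\leq 1-e^{-|\Omega|}
\]
for every measurable $\Omega\subseteq\bC$ and every $F\in\cF^2$ with $\|F\|_{\cF^2}=1$, the equality case being $\Omega$ a disk and $F$ a unimodular scalar multiple of the reproducing kernel at the center of $\Omega$.

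The core step is a sharp differential inequality for $\mu(\tau):=|\{u>\tau\}|$. Since $F$ is holomorphic, $\Delta\log|F|^2$ is a sum of Dirac masses at the zeros of $F$, and together with $\Delta(-\pi|z|^2)=-4\pi$ this gives $\Delta\log u = 4\pi\sum_j m_j\delta_{a_j}-4\pi$. For any $\tau>0$ the zeros of $F$ lie outside the super-level set $U_\tau:=\{u>\tau\}$, so applying the divergence theorem on $U_\tau$ (at the a.e.\ $\tau$ for which $\partial U_\tau$ is smooth, by Sard's theorem) yields the identity
\[
\int_{\partial U_\tau}|\nabla\log u|\dH = 4\pi\mu(\tau).
\]
Combining this with the planar isoperimetric inequality $P(U_\tau)^2\geq 4\pi\mu(\tau)$, Cauchy--Schwarz on $\partial U_\tau$, and the co-area identity $\int_{\partial U_\tau}|\nabla\log u|^{-1}\dH = -\tau\mu'(\tau)$, one obtains the pointwise bound $\mu'(\tau)\leq -1/\tau$ for a.e.\ $\tau\in(0,\sup u)$.

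Translated to the decreasing rearrangement $u^\sharp$ (the inverse of $\mu$), this reads $(u^\sharp)'(s)\geq -u^\sharp(s)$, i.e., $s\mapsto e^s u^\sharp(s)$ is non-decreasing. Setting $\psi(s):=\int_0^s u^\sharp$, this monotonicity together with $\int u^\sharp=1$ gives
\[
1-\psi(s)=\int_s^\infty u^\sharp(\sigma)\,d\sigma\geq u^\sharp(s)\int_s^\infty e^{s-\sigma}\,d\sigma = u^\sharp(s)=\psi'(s),
\]
so $\psi+\psi'\leq 1$. Multiplying by $e^s$ and integrating from $0$ (where $\psi=0$) yields $\psi(s)\leq 1-e^{-s}$, and the Hardy--Littlewood inequality $\int_\Omega u\leq\psi(|\Omega|)$ then delivers the Faber--Krahn bound.

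The hard part is the equality discussion. Tracing back, equality forces: (i) equality in Hardy--Littlewood, so $\Omega$ coincides up to a null set with a super-level set of $u$; (ii) $e^s u^\sharp(s)\equiv 1$, so $u^\sharp(s)=e^{-s}$ and in particular $\sup u=1$; (iii) equality in the isoperimetric and Cauchy--Schwarz steps for a.e.\ $\tau$, forcing $U_\tau$ to be a disk with $|\nabla\log u|$ constant on $\partial U_\tau$. Together these pin down $u(z)=e^{-\pi|z-z_0|^2}$ for some $z_0\in\bC$, whence $|F(z)|^2=e^{2\pi\realp(z\bar z_0)-\pi|z_0|^2}$ forces $F$ to be a unimodular scalar multiple of the reproducing kernel at $z_0$; on the $L^2(\bR)$ side this is exactly a non-zero scalar multiple of $e^{2\pi i\omega_0 x}\varphi(x-x_0)$ as in \eqref{optf}. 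The real technical work lies in making the divergence-theorem and coarea computations rigorous in spite of the log-singularities of $\log u$ at zeros of $F$ and the possible non-smoothness of $\partial U_\tau$, and in simultaneously enforcing all the equality conditions above.
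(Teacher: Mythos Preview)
Your approach matches the paper's: transfer to the Fock space, derive the differential inequality $(u^*)'+u^*\geq 0$ from Cauchy--Schwarz, the planar isoperimetric inequality, and the identity $\Delta\log u=-4\pi$ on super-level sets, then integrate. The only differences are cosmetic. Where you obtain $\psi'\leq 1-\psi$ and apply a Gr\"onwall-type integration, the paper instead substitutes $\sigma=e^{-s}$ and observes that $G(\sigma):=\psi(-\log\sigma)$ is convex on $[0,1]$ with $G(0)=1$, $G(1)=0$; both arguments use exactly the same two pieces of information (the monotonicity of $e^s u^*(s)$ and the normalization $\int_0^\infty u^*=1$). For the equality case, the paper's main line is shorter than your route (iii): once you have $\sup u=1=\|F\|_{\cF^2}^2$ from (ii), the Fock pointwise bound $|F(z)|^2e^{-\pi|z|^2}\leq\|F\|_{\cF^2}^2$ (with equality at $z_0$ if and only if $F=cF_{z_0}$) immediately identifies $F$, and then $\Omega$, without having to chase equality through the isoperimetric and Cauchy--Schwarz steps. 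The paper does present your route (iii) as an alternative in a remark following the proof.
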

This ``Faber--Krahn inequality'' (see Remark \ref{remFK} at the end of this section)
proves, in the $L^2$-case, a conjecture
by Abreu and Speckbacher \cite{abreu2018} (the full conjecture 
is proved in Theorem \ref{thm lpconc}),
and confirms the distinguished role played by the Gaussian \eqref{optf}, as the first eigenfunction
of the operator $\cV^\ast \mathbbm{1}_\Omega \cV$ when
$\Omega$ has radial symmetry (see \cite{daubechies}; see also \cite{donoho1989} for a related conjecture on band-limited functions,
and \cite[page 162]{cowling} for further insight).

When $\Omega$ is a ball of radius $r$, one can see that $\PhiOm=1-e^{-\pi r^2}$
(this follows from the results in \cite{daubechies}, and will also follow from our  proof of Theorem \ref{thm mainthm}).
Hence we deduce a more explicit form of our result,
which leads to a sharp form of the uncertainty principle for the STFT.

\begin{theorem}[Sharp uncertainty principle for the STFT]\label{cor maincor}
For every subset $\Omega\subset\bR^2$ whose Lebesgue measure $|\Omega|$ is finite we have 
\begin{equation}\label{eq stima 0}
\PhiOm\leq 1-e^{-|\Omega|}
\end{equation}
and, if $|\Omega|>0$, equality occurs if and only if $\Omega$ is a ball. 

As a consequence, if for some $\epsilon\in (0,1)$, some function $f\in L^2(\bR)\setminus\{0\}$ 
 and some $\Omega\subset\bR^2$ we have $\PhiOmega{f}\geq 1-\epsilon$, then necessarily
\begin{equation}\label{eq stima eps}
|\Omega|\geq \log(1/\epsilon),
\end{equation}
with equality   if and only if $\Omega$ is a ball and $f$ has the form \eqref{optf},
 where $(x_0,\omega_0)$ is the center of the ball. 
\end{theorem}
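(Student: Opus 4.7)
The plan is to deduce Theorem \ref{cor maincor} from Theorem \ref{thm mainthm} essentially for free: once one knows (a) that balls are the only optimizers and (b) the explicit value of $\Phi_B$ when $B$ is a ball, both statements reduce to a one-line monotonicity chase. The only piece of actual computation needed is the evaluation of $\Phi_B$ for a ball, which I would do up front.

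\smallskip

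\emph{Step 1: explicit formula for $\Phi_B$.} Let $B\subset \bR^2$ be a ball of measure $s>0$, centred at $(x_0,\omega_0)$, so its radius is $r=\sqrt{s/\pi}$. Pick $f(x)=e^{2\pi i\omega_0 x}\varphi(x-x_0)$ as in \eqref{optf} (with $|c|=1$), which has unit $L^2$ norm. A direct Gaussian integration gives $|\cV\varphi(x,\omega)|^2=e^{-\pi(x^2+\omega^2)}$, and the standard covariance of the STFT under time--frequency shifts yields $|\cV f(x,\omega)|^2=e^{-\pi((x-x_0)^2+(\omega-\omega_0)^2)}$. Integrating in polar coordinates about $(x_0,\omega_0)$,
\[
\int_B |\cV f|^2\,dx\,d\omega=2\pi\int_0^r e^{-\pi\rho^2}\rho\,d\rho=1-e^{-\pi r^2}=1-e^{-s}.
\]
By Theorem \ref{thm mainthm}, the function $f$ is a maximizer for $\PhiOm$ when $\Omega=B$, so $\Phi_B=1-e^{-|B|}$.

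\smallskip

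\emph{Step 2: the inequality \eqref{eq stima 0}.} For an arbitrary measurable $\Omega\subset\bR^2$ with $0<|\Omega|<\infty$, let $B$ be a ball with $|B|=|\Omega|$. Theorem \ref{thm mainthm} asserts that, among sets of prescribed measure, $\PhiOm$ is maximal precisely on balls, hence $\PhiOm\leq \Phi_B=1-e^{-|\Omega|}$, with equality if and only if $\Omega$ coincides with a ball up to a null set. (The case $|\Omega|=0$ is trivial since $\PhiOm=0$.)

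\smallskip

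\emph{Step 3: the concentration statement \eqref{eq stima eps}.} If $\PhiOmega{f}\geq 1-\epsilon$, then combining the definition of $\PhiOm$ as a supremum with Step 2 gives the chain
\[
1-\epsilon\leq \PhiOmega{f}\leq \PhiOm \leq 1-e^{-|\Omega|},
\]
which yields $|\Omega|\geq \log(1/\epsilon)$ at once. If this is an equality, all three inequalities above collapse to equalities: the last forces $\Omega$ to be a ball by Step 2, and the middle one says that $f$ realizes the maximum in \eqref{eee}, so by the equality case of Theorem \ref{thm mainthm} the function $f$ must be of the form \eqref{optf} with $(x_0,\omega_0)$ the centre of $\Omega$.

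\smallskip

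There is no real obstacle here: all the weight of the argument is absorbed into Theorem \ref{thm mainthm}, and the only non-trivial computation is the ball integral in Step 1, which is a textbook Gaussian calculation. The proposal is therefore just to record Steps 1--3 in this order.
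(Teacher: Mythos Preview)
Your proposal is correct and mirrors exactly how the paper deduces Theorem \ref{cor maincor}: the paper notes that $\Phi_B=1-e^{-\pi r^2}$ for a ball (the same Gaussian computation you do in Step~1, carried out at the end of the proof of Theorem \ref{thm36}) and then presents Theorem \ref{cor maincor} as an immediate ``more explicit form'' of Theorem \ref{thm mainthm}. Your Steps 2--3 are precisely this deduction.
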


Theorem \ref{cor maincor} solves the long--standing problem of the optimal lower bound for the measure of the ``essential support" of the STFT with Gaussian window.
The best result so far in this direction was obtained by Gr\"ochenig (see
\cite[Theorem 3.3.3]{grochenig-book}) as a consequence of Lieb's uncertainly inequality \cite{lieb} for the STFT, and consists of the following (rougher, but valid for any window) lower bound
\begin{equation}\label{eq statart}
|\Omega|\geq \sup_{p>2}\,(1-\epsilon)^{p/(p-2)}(p/2)^{2/(p-2)}
\end{equation}
(see Section \ref{sec genaralizations} for a discussion in dimension $d$).
Notice that the $\sup$ in \eqref{eq statart} is a bounded function of $\epsilon\in (0,1)$, as 
opposite to the optimal bound in \eqref{eq stima eps} (see Fig.~\ref{figure1} in the
Appendix for a graphical comparison).

We point out that, although in this introduction the discussion of our results
is confined (for ease of notation and exposition) to the one dimensional case, our results are
  valid in arbitrary space dimension, as discussed in Section \ref{sec mult} (Theorem
\ref{thm mult} and Corollary \ref{cor cor2}).

While addressing the reader to \cite{bonami,folland,grochenig} for a review of the numerous uncertainty principles available for the STFT (see also \cite{boggiatto,degosson,demange2005,galbis2010}),
we observe
that  inequality \eqref{eq stima 0} is nontrivial even when $\Omega$ has radial
symmetry: in this particular case it was proved in \cite{galbis2021}, exploiting the already mentioned diagonal representation in the Hermite basis.

Some concentration--type estimates were recently provided in \cite{abreu2018} as an application of the Donoho-Logan large sieve principle \cite{donoho1992} and the Selberg-Bombieri inequality \cite{bombieri}. However, though this machinery certainly has a broad applicability, as observed in \cite{abreu2018} it does not seem to give sharp bounds for the problem above. For interesting applications to signal recovery we refer to \cite{abreu2019,pfander2010,pfander2013,tao} and the references therein. 

Our proof of Theorem \ref{thm mainthm} (and of its multidimensional analogue Theorem \ref{thm mult})
is based on techniques from measure theory, after
the problem has been rephrased as an equivalent statement (where the STFT is no longer involved
explicitly)
in the Fock space. In order to present our strategy in a clear way and to
better highlight the  main ideas, we devote Section \ref{sec proof} to a detailed proof of our main
results in dimension one, while the results in arbitrary dimension
 are stated and proved in Section \ref{sec mult}, focusing on all those things that need to be changed and adjusted.

In Section \ref{sec genaralizations} we discuss some extensions of the above results in different directions,
such as a local version of Lieb's uncertainty inequality for the STFT in $L^p$ when  $p\in [2,\infty)$ (Theorem \ref{thm locallieb}), and $L^p$-concentration estimates for the STFT 
when $p\in [1,\infty)$ (Theorem \ref{thm lpconc}, which proves \cite[Conjecture 1]{abreu2018}), 
identifying in all cases the extremals $f$ and $\Omega$, as above. 
We also study the effect of changing the window $\varphi$ by a dilation or, more generally, by a metaplectic operator.

 We believe that the techniques used in this paper could also shed new light on the Donoho-Stark uncertainty principle \cite{donoho1989} and the corresponding conjecture \cite[Conjecture 1]{donoho1989}, and that also the stability
of \eqref{eq stima 0} (via a quantitative version when the inequality is strict) can be
investigated.
We will address these issues in a subsequent work, together with applications to signal recovery.

\begin{remark}\label{remFK}
The maximization of $\PhiOm$ among all sets $\Omega$ of prescribed measure
can be regarded as a \emph{shape optimization} problem (see \cite{bucur}) and, in this respect, 
Theorem \ref{thm mainthm} shares many analogies
with the celebrated Faber-Krahn inequality (beyond the fact that both problems
have the ball as a solution). The latter states that, among all (quasi) open sets
$\Omega$ of given measure, the ball minimizes the first Dirichlet eigenvalue
\[
\lambda_\Omega:=\min_{u\in H^1_0(\Omega)\setminus\{0\}}
\frac{\int_\Omega |\nabla u(z)|^2\,dz}{\int_\Omega u(z)^2\,dz}.
\]
On the other hand, if $T_\Omega:H^1_0(\Omega)\to H^1_0(\Omega)$ is the linear operator
that associates with every (real-valued) $u\in H^1_0(\Omega)$ the weak solution $T_\Omega u\in H^1_0(\Omega)$
of the problem
$-\Delta (T_\Omega u)=u$ in $\Omega$, integrating by parts we have
\[
\int_\Omega  u^2 \,dz=
-\int_\Omega u \Delta(T_\Omega u)\,dz=\int_\Omega \nabla u\cdot \nabla (T_\Omega u)\,dz=\langle T_\Omega
u,u\rangle_{H^1_0},
\]
so that Faber-Krahn can be rephrased by claiming that
\[
\lambda_\Omega^{-1}:=\max_{u\in H^1_0(\Omega)\setminus\{0\}}
\frac{\int_\Omega u(z)^2\,dz}{\int_\Omega |\nabla u(z)|^2\,dz}
=\max_{u\in H^1_0(\Omega)\setminus\{0\}}
\frac{\langle T_\Omega u,u\rangle_{H^1_0}}{\Vert u\Vert^2_{H^1_0}}
\]
is maximized (among  all open sets of  given measure) by the ball. Hence the statement of
Theorem \ref{thm mainthm} 
can be regarded as a Faber-Krahn inequality for the operator  
$\cV^\ast \mathbbm{1}_\Omega \cV$.
\end{remark}

\section{Rephrasing the problem in the Fock space}\label{sec sec2}
It turns out that the optimization problems discussed in the introduction
can be conveniently rephrased in terms of functions in the Fock space on $\bC$.
We address the reader to \cite[Section 3.4]{grochenig-book} and \cite{zhu} for more details on the relevant results that we are going to review, in a self-contained form, in this section. 

The Bargmann transform of a function $f\in L^2(\bR)$ is defined as 
\[
\cB f(z):= 2^{1/4} \int_\bR f(y) e^{2\pi yz-\pi y^2-\frac{\pi}{2}z^2}\, dy,\qquad z\in\bC.
\]
It turns out that $\cB f(z)$ is an entire holomorphic function and $\cB$ is a unitary operator from $L^2(\bR)$ to the Fock space $\cF^2(\bC)$ of all holomorphic functions $F:\bC\to\bC$ such that 
\begin{equation}\label{defHL}
\|f\|_{\cF^2}:=\Big(\int_\bC |F(z)|^2 e^{-\pi|z|^2}dz\Big)^{1/2}<\infty.
\end{equation}
In fact, $\cB$ maps the orthonormal basis of Hermite functions in $\bR$ into the orthonormal basis of $\cF^2(\bC)$ given by the monomials
\begin{equation}\label{eq ek}
e_k(z):=\Big(\frac{\pi^k}{k!}\Big)^{1/2} z^k,\qquad k=0,1,2,\ldots; \quad z\in\bC. 
\end{equation}
In particular, for the first Hermite function $\varphi(x)=2^{1/4}e^{-\pi x^2}$, that is, the window in \eqref{defvarphi}, we have $\cB \varphi(z)=e_0(z)=1$. 

The connection with the STFT is based on the following crucial formula (see e.g. \cite[Formula (3.30)]{grochenig-book}):
\begin{equation}\label{eq STFTbar}
\cV f(x,-\omega)=e^{\pi i x\omega} \cB f(z) e^{-\pi|z|^2/2},\qquad z=x+i\omega,
\end{equation}
which allows one to rephrase the functionals in \eqref{defphiomegaf} as 
\[
\PhiOmega{f}=\frac{\int_\Omega |\cV f(x,\omega)|^2\, dxd\omega}{\|f\|^2_{L^2}}=
\frac{\int_{\Omega'}|\cB f(z)|^2e^{-\pi|z|^2}\, dz}{\|\cB f\|^2_{\cF^2}}
\]
where $\Omega'=\{(x,\omega):\ (x,-\omega)\in\Omega\}$. Since $\cB:L^2(\bR)\to\cF^2(\bC)$ is a unitary operator, we can safely transfer the optimization problem in Theorem \ref{thm mainthm} directly on 
$\cF^2(\bC)$, observing that
\begin{equation}\label{eq max comp}
\Phi_\Omega=
\max_{F\in\cF^2(\bC)\setminus\{0\}} \frac{\int_{\Omega}|F(z)|^2e^{-\pi|z|^2}\, dz}{\|F\|^2_{\cF^2}}.
\end{equation}
 We will adopt this point of view in Theorem \ref{thm36} below. \par
In the meantime, two remarks are in order. First, we claim that the maximum in \eqref{eq max comp} is invariant under translations of the set $\Omega$. To see this, consider for any $z_0\in\bC$, the operator $U_{z_0}$ defined as
\begin{equation}\label{eq Uz_0}
U_{z_0} F(z)=e^{-\pi|z_0|^2 /2} e^{\pi z\overline{z_0}} F(z-z_0).
\end{equation}
The map $z\mapsto U_z$ turns out to be a projective unitary representation of $\bC$ on $\cF^2(\bC)$, satisfying 
\begin{equation}\label{eq transl}
|F(z-z_0)|^2 e^{-\pi|z-z_0|^2}=|U_{z_0} F(z)|^2 e^{-\pi|z|^2},
\end{equation}
which proves our claim. Invariance under rotations in the plane is also immediate. 

Secondly, we observe that the Bargmann transform intertwines the action of the representation $U_z$ with the so-called ``time-frequency shifts": 
\[
\cB M_{-\omega} T_{x} f= e^{-\pi i x\omega} U_z \cB f, \qquad z=x+i\omega
\]
for every $f\in L^2(\bR)$, where $T_{x}f(y):=f(y-x)$ and $M_{\omega}f(y):=e^{2\pi iy\omega}f(y)$ are the translation and modulation operators. This allows us to write down easily the Bargmann transform of the maximizers appearing in Theorem \ref{thm mainthm}, namely $c U_{z_0} e_0$, $c\in\bC\setminus\{0\}$, $z_0\in\bC$. For future reference, we explicitly set 
\begin{equation}\label{eq Fz0}
F_{z_0}(z):=U_{z_0} e_0(z)=e^{-\frac{\pi}{2}|z_0|^2} e^{\pi z\overline{z_0}}, \quad z,z_0\in\bC.
\end{equation}
The following result shows the distinguished role played by the functions $F_{z_0}$ in connection with extremal problems. A proof can be found in \cite[Theorem 2.7]{zhu}. For the sake of completeness we present a short and elementary proof which generalises in higher dimension.

\begin{proposition}\label{pro1}
Let $F\in\cF^2(\bC)$. Then 
\begin{equation}\label{eq bound}
|F(z)|^2 e^{-\pi|z|^2}\leq \|F\|^2_{\cF^2}\qquad \forall z\in\bC,
\end{equation}
and $|F(z)|^2 e^{-\pi|z|^2}$ vanishes at infinity.
Moreover the equality in \eqref{eq bound} occurs at some point $z_0\in\bC$ if and only if $F=cF_{z_0}$ for some $c\in \bC$.
\end{proposition}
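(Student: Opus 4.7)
The plan is to recognize $\cF^2(\bC)$ as a reproducing kernel Hilbert space and to obtain the pointwise bound as a direct application of Cauchy--Schwarz, with the equality case coming for free. The vanishing at infinity will then follow by approximation with polynomials.

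First, using the orthonormal basis $\{e_k\}$ given in \eqref{eq ek}, I would compute the reproducing kernel as
\[
K(z,w)=\sum_{k=0}^\infty e_k(z)\overline{e_k(w)}=\sum_{k=0}^\infty \frac{\pi^k}{k!}z^k\overline{w}^k=e^{\pi z\overline{w}},
\]
the series converging locally uniformly. A quick verification (integrate against $F=\sum a_k e_k$ term by term, justified by the monotone/dominated convergence) confirms the reproducing identity
\[
F(z)=\langle F,K_z\rangle_{\cF^2},\qquad K_z(w):=e^{\pi w\overline{z}}\in\cF^2(\bC),
\]
and one reads off $\|K_z\|_{\cF^2}^2=K(z,z)=e^{\pi|z|^2}$.

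Applying Cauchy--Schwarz in $\cF^2(\bC)$ immediately yields
\[
|F(z)|=|\langle F,K_z\rangle|\leq \|F\|_{\cF^2}\,\|K_z\|_{\cF^2}=\|F\|_{\cF^2}\,e^{\pi|z|^2/2},
\]
which is \eqref{eq bound}. For the equality case at a point $z_0$: Cauchy--Schwarz attains equality exactly when $F$ is a scalar multiple of $K_{z_0}$, and since by \eqref{eq Fz0} one has $F_{z_0}=e^{-\pi|z_0|^2/2}K_{z_0}$, the multiples of $K_{z_0}$ and of $F_{z_0}$ coincide, which is the claimed characterization.

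For the vanishing at infinity, I would use the density of polynomials in $\cF^2(\bC)$ (i.e.\ truncating the expansion $F=\sum a_k e_k$). Given $\varepsilon>0$, choose a polynomial $P_\varepsilon$ with $\|F-P_\varepsilon\|_{\cF^2}<\varepsilon$. The triangle inequality and the bound just proved give
\[
\bigl(|F(z)|^2 e^{-\pi|z|^2}\bigr)^{1/2}\leq |P_\varepsilon(z)|\,e^{-\pi|z|^2/2}+\|F-P_\varepsilon\|_{\cF^2}.
\]
Since $P_\varepsilon$ is a polynomial, $|P_\varepsilon(z)|e^{-\pi|z|^2/2}\to 0$ as $|z|\to\infty$; hence $\limsup_{|z|\to\infty}(|F(z)|^2 e^{-\pi|z|^2})^{1/2}\leq \varepsilon$, and letting $\varepsilon\to 0$ concludes the proof. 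The only mildly delicate point is the computation of the reproducing kernel --- specifically, justifying the termwise evaluation of $\langle F,K_z\rangle$ --- but this is standard once one notes that $K_z\in\cF^2$ with the norm computed above, so it causes no real obstacle.
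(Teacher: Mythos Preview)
Your proof is correct and is essentially the same as the paper's: the paper applies Cauchy--Schwarz directly to the coefficient sequence $(c_k)$ against $(e_k(z))$, which is exactly your inequality $|\langle F,K_z\rangle|\leq \|F\|_{\cF^2}\|K_z\|_{\cF^2}$ written in coordinates, and the vanishing at infinity is proved the same way, by density of polynomials combined with the pointwise bound. The only difference is packaging---you phrase it in reproducing-kernel language, the paper stays at the level of series---but the underlying argument is identical.
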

\begin{proof}
By homogeneity we can suppose $\|F\|_{\cF^2}=1$, hence $F=\sum_{k\geq0} c_k e_k$ (cf.\ \eqref{eq ek}), with $\sum_{k\geq 0} |c_k|^2=1$. By the Cauchy-Schwarz inequality we obtain
\[
|F(z)|^2\leq \sum_{k\geq 0} |e_k(z)|^2 =\sum_{k\geq0} \frac{\pi^k}{k!}|z|^{2k}=e^{\pi|z|^2} \quad \forall z\in\bC.
\]
Equality in this estimate occurs at some point $z_0\in\bC$ if and only if $c_k=ce^{-\pi |z_0|^2/2}\overline{e_k(z_0)}$, for some $c\in\bC$, $|c|=1$, which gives 
\[
F(z)=  ce^{-\pi|z_0|^2/2}\sum_{k\geq0} \frac{\pi^k}{k!}(z \overline{z_0})^k=cF_{z_0}(z).
\]
Finally, the fact that $|F(z)|^2 e^{-\pi|z|^2}$ vanishes at infinity is clearly true if $F(z)=z^k$, $k\geq0$, and therefore holds for every $F\in \cF^2(\bC)$ by density, because of \eqref{eq bound}. 
\end{proof}

\section{Proof of the main results in dimension $1$}\label{sec proof}
In this section we prove
Theorems \ref{thm mainthm} and \ref{cor maincor}. In fact, 
by the discussion in Section \ref{sec sec2}, cf.\ \eqref{eq max comp}, 
these will follow (without further reference) from the following result,
which will be proved at the end of this section, after a few preliminary results
have been established.

\begin{theorem}\label{thm36}
For every $F\in \cF^2(\bC)\setminus\{0\}$ and every measurable set $\Omega\subset\bR^2$
of finite measure, 
we have
\begin{equation}
\label{stimaquoz}
\frac{\int_\Omega|F(z)|^2 e^{-\pi|z|^2}\, dz}{\|F\|_{\cF^2}^2}
\leq 1-e^{-|\Omega|}.
\end{equation}
Moreover, recalling \eqref{eq Fz0}, equality occurs (for some $F$ and for some $\Omega$ such that
$0<|\Omega|<\infty$) if and only if $F=c F_{z_0}$ (for some
$z_0\in\bC$ and some nonzero $c\in\bC$) and $\Omega$ is equivalent, 
up to a set of measure zero, to
a ball centered at $z_0$.
\end{theorem}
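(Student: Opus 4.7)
The plan is to exploit the fact that $u(z) := |F(z)|^2 e^{-\pi|z|^2}$ satisfies the pointwise bound $u \leq \|F\|_{\cF^2}^2$ (Proposition~\ref{pro1}) and, crucially, the PDE $-\Delta \log u = 4\pi$ at every point where $F \neq 0$, since $\log|F|^2$ is harmonic off the (discrete) zero set of $F$. After normalising $\|F\|_{\cF^2}=1$ so that $\int u\,dz = 1$ and $u \leq 1$, I would first reduce the problem to super-level sets of $u$: by the bathtub principle, the maximum of $\int_\Omega u\,dz$ over measurable sets of prescribed measure $s:=|\Omega|$ is attained at $\Omega=\{u>t^*\}$ with $t^*$ chosen so that $V(t^*)=s$, where $V(t):=|\{u>t\}|$ denotes the distribution function of $u$.

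The key analytic step is then the differential inequality $V'(t) \leq -1/t$ for a.e.\ $t \in (0, u_{\max})$. Since $\{u > t\}$ avoids the zeros of $F$ for $t > 0$, Sard's theorem (applicable since $u$ is real-analytic) together with the divergence theorem gives, for a.e.\ $t$,
\[
4\pi V(t) \;=\; \int_{\{u>t\}}(-\Delta \log u)\,dz \;=\; \frac{1}{t}\int_{\{u=t\}}|\nabla u|\,d\cH^1.
\]
Coupling the Cauchy-Schwarz inequality
\[
|\{u=t\}|^2 \;\leq\; \Big(\int_{\{u=t\}}|\nabla u|\,d\cH^1\Big)\Big(\int_{\{u=t\}}\tfrac{d\cH^1}{|\nabla u|}\Big) \;=\; 4\pi\, t\, V(t)\,(-V'(t))
\]
with the planar isoperimetric inequality $|\{u=t\}|^2 \geq 4\pi V(t)$ yields, after cancelling $4\pi V(t)$, exactly $-V'(t) \geq 1/t$. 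Equivalently, denoting by $t(s) := V^{-1}(s)$ the decreasing inverse of $V$, the function $s \mapsto t(s)\,e^s$ is non-decreasing on $[0,\infty)$.

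The conclusion then follows from a short sign-change argument. By the bathtub identity, $\int_{\{u>t(s)\}} u\,dz = \int_0^s t(s')\,ds'$, so setting $\Psi(s) := (1-e^{-s}) - \int_0^s t(s')\,ds'$ one has $\Psi(0)=0$, $\Psi(\infty) = 1 - \|F\|_{\cF^2}^2 = 0$, and $e^s\Psi'(s) = 1 - t(s)e^s$ is non-increasing and starts at $1 - u_{\max} \geq 0$. Hence $\Psi'$ changes sign at most once, from non-negative to non-positive, and together with the two endpoint identities this forces $\Psi \geq 0$ on $[0,\infty)$; this is precisely \eqref{stimaquoz}. For the equality case, $\Psi(s_0) = 0$ at some $s_0 \in (0, \infty)$ combined with $\Psi \geq 0$ and the sign-change structure forces $\Psi \equiv 0$, hence $t(s) = e^{-s}$ throughout, hence $V(t) = \log(1/t)$ on $(0,1]$ and $u_{\max}=1$. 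Equality in the isoperimetric step at a.e.\ level then forces $\{u>t\}$ to be a ball for a.e.\ $t$, so $u$ is radial around some $z_0 \in \bC$, and matching distribution functions yields $u(z) = e^{-\pi|z-z_0|^2}$; writing $|F|^2 = |F_{z_0}|^2$ with both sides zero-free gives $F = cF_{z_0}$ with $|c|=1$, while equality in the bathtub step identifies $\Omega$ with the ball $\{u>t^*\}$ up to a null set.

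The hard part will be making the coarea/divergence chain rigorous in the presence of critical points of $u$ and of the zeros of $F$. The real-analyticity of $u$ and Sard's theorem ensure that $\{u=t\}$ is a smooth $1$-manifold for a.e.\ $t$; moreover the zeros of $F$ are locally finite and lie in $\{u=0\}$, so they are disjoint from every super-level set $\{u>t\}$ with $t>0$, and no boundary contribution from the singular part of $\Delta \log |F|^2 = 4\pi\sum_j \delta_{a_j}$ enters the computation. Once $V'(t) \leq -1/t$ is secured, all subsequent steps are essentially elementary.
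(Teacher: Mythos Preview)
Your approach is essentially the paper's: the same reduction to super-level sets via the bathtub principle, the same chain combining the coarea formula, the divergence theorem applied to $\nabla\log u$ (using $\Delta\log u=-4\pi$), Cauchy--Schwarz, and the planar isoperimetric inequality, yielding the differential inequality that you phrase as $-V'(t)\geq 1/t$ and the paper as $(u^*)'(s)+u^*(s)\geq 0$. Your final sign-change argument on $\Psi$ is equivalent to the paper's packaging via convexity of $G(\sigma):=I(-\log\sigma)$ on $[0,1]$ (with $G(0)=1$, $G(1)=0$); both rest on the fact that $e^s u^*(s)=t(s)e^s$ is non-decreasing. The paper also spends some care (its Lemma~\ref{lemmau*}) on exactly the regularity issue you flag as the ``hard part''.

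There is one genuine, if minor, gap in your equality analysis. From ``$\{u>t\}$ is a ball for a.e.\ $t$'' you jump to ``$u$ is radial about some $z_0$'', but nested balls need not be concentric, so this inference requires justification. The paper avoids this altogether: since you already obtain $u_{\max}=t(0)=1=\|F\|_{\cF^2}^2$, equality holds in \eqref{eq bound} at the maximiser, and Proposition~\ref{pro1} immediately gives $F=cF_{z_0}$; the shape of $\Omega$ then follows from the bathtub equality together with the explicit form \eqref{uradial} of $u$. (The paper's Remark after the proof discusses your isoperimetric-equality route as an alternative, but closes it via Serrin's symmetry theorem rather than by asserting radiality directly.)
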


Throughout the rest of this section, in view of proving
\eqref{stimaquoz}, given an arbitrary function
$F\in \cF^2(\bC)\setminus\{0\}$ we shall investigate several
properties of the function
\begin{equation}
\label{defu}
u(z):=|F(z)|^2 e^{-\pi|z|^2},
\end{equation}
in connection with its super-level sets

\begin{equation}
\label{defAt}
A_t:=\{u>t\}=\left\{z\in\bR^2\,:\,\, u(z)>t\right\},
\end{equation}
its \emph{distribution function}

\begin{equation}
\label{defmu}
\mu(t):= |A_t|,\qquad 0\leq t\leq \max_{\bC} u
\end{equation}
(note that $u$ is bounded due to \eqref{eq bound}),
and the \emph{decreasing rearrangement} of $u$, i.e. the function
\begin{equation}
\label{defclassu*}
u^*(s):=\sup\{t\geq 0\,:\,\, \mu(t)>s\}\qquad \text{for $s\geq 0$}
\end{equation}
(for more details on rearrangements, we refer to  \cite{baernstein}).
Since $F(z)$ in \eqref{defu} is entire holomorphic, $u$ (which letting $z=x+i\omega$
can be regarded as a real-valued function
$u(x,\omega)$ on $\bR^2$) has several nice properties which will simplify our analysis. In particular,
$u$ is \emph{real analytic} and hence, since $u$ is not a constant,  \emph{every} level set
of $u$ has zero measure (see e.g. \cite{krantz}), i.e.
\begin{equation}
\label{lszm}
\left| \{u=t\}\right| =0\quad\forall t\geq 0
\end{equation}
and, similarly, the set of all critical points of $u$ has zero measure, i.e.
\begin{equation}
\label{cszm}
\left| \{|\nabla u|=0\}\right| =0.
\end{equation}
Moreover, since by Proposition \ref{pro1} $u(z)\to 0$ as $|z|\to\infty$, by Sard's
Lemma we see that for a.e. $t\in (0,\max u)$ the super-level set $\{u>t\}$ is a bounded 
open set in $\bR^2$ with smooth boundary
\begin{equation}
\label{boundaryAt}
\partial\{u>t\}=\{u=t\}\quad\text{for a.e. $t\in (0,\max u).$}
\end{equation} 
Since $u(z)>0$ a.e. (in fact everywhere, except at most at isolated points), 
\[
\mu(0)=\lim_{t\to 0^+}\mu(t)=+\infty,
\]
while
the finiteness of $\mu(t)$ when $t\in (0,\max u]$ is entailed by the fact that $u\in L^1(\bR^2)$, 
according to \eqref{defu} and \eqref{defHL} (in particular $\mu(\max u)=0$).
Moreover, by \eqref{lszm}  $\mu(t)$ is \emph{continuous} (and not
just right-continuous) at \emph{every point} $t\in (0,\max u]$.
Since $\mu$ is also strictly decreasing, we see that $u^*$, according to \eqref{defclassu*}, 
is just the elementarly defined \emph{inverse function} of $\mu$ (restricted to $(0,\max u]$), i.e. 
\begin{equation}
\label{defu*}
u^*(s)=\mu^{-1}(s) \qquad\text{for $s\geq 0$,}
\end{equation}
which maps $[0,+\infty)$ decreasingly and continuously onto $(0,\max u]$.

In the following we will strongly rely on the following result.
\begin{lemma}\label{lemmau*}
The function $\mu$  is absolutely continuous
on the compact subintervals of $(0,\max u]$, and
\begin{equation}
\label{dermu}
-\mu'(t)= \int_{\{u=t\}} |\nabla u|^{-1} \dH \qquad\text{for a.e. $t\in (0,\max u)$.}
\end{equation}
Similarly, 
the function $u^*$ is absolutely continuous on  the compact subintervals of $[0,+\infty)$, and
\begin{equation}
\label{deru*} -(u^*)'(s)=
\left(\int_{\{u=u^*(s)\}} |\nabla u|^{-1} \dH\right)^{-1} \qquad\text{for a.e. $s\geq 0$.}
\end{equation}
\end{lemma}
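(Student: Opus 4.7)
The plan is to apply the coarea formula to the real analytic function $u$, taking full advantage of the two facts already recorded: level sets of $u$ are Lebesgue-null (equation \eqref{lszm}) and the critical set of $u$ is Lebesgue-null (equation \eqref{cszm}). Once $\mu$ is shown to be absolutely continuous with the stated derivative, the corresponding statement for $u^*=\mu^{-1}$ will follow from an inverse-function argument.

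For \eqref{dermu}, fix $0<a<b\leq \max u$ and apply the coarea formula to the Borel function $g(z):=|\nabla u(z)|^{-1}\mathbbm{1}_{\{a<u<b\}\cap\{|\nabla u|>0\}}(z)$, obtaining
\[
\int_{\{a<u<b\}\cap\{|\nabla u|>0\}} dz
=\int_a^b \int_{\{u=t\}\cap\{|\nabla u|>0\}} |\nabla u|^{-1} \dH \, dt.
\]
By \eqref{cszm} the left-hand side equals $|\{a<u<b\}|$, and by \eqref{lszm} this in turn equals $\mu(a)-\mu(b)$. On the right-hand side, Sard's lemma (cf.\ \eqref{boundaryAt}) ensures that, for a.e.\ $t$, the level set $\{u=t\}$ contains no critical points, so the inner integral is $\nu(t):=\int_{\{u=t\}}|\nabla u|^{-1}\dH$. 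Therefore $\mu(a)-\mu(b)=\int_a^b\nu(t)\,dt$, which yields at once both the absolute continuity of $\mu$ on compact subintervals of $(0,\max u]$ and the identity $-\mu'(t)=\nu(t)$ a.e., i.e.\ \eqref{dermu}.

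For \eqref{deru*}, recall that $u^*=\mu^{-1}:[0,+\infty)\to(0,\max u]$ is continuous and strictly decreasing. I first claim $\nu(t)>0$ for a.e.\ $t\in(0,\max u)$: by Sard's lemma, for a.e.\ such $t$ the level set $\{u=t\}$ is a smooth $1$-manifold, which is moreover nonempty since $\{u>t\}$ is a nonempty bounded open set with boundary $\{u=t\}$ (cf.\ \eqref{boundaryAt}); since $|\nabla u|>0$ on it, $\nu(t)>0$. Hence $\mu'<0$ a.e., and the change-of-variables formula for the AC, strictly monotone function $\mu$ gives, for every Borel $E\subset[0,+\infty)$,
\[
|E|=\int_{\mu^{-1}(E)}|\mu'(t)|\,dt,
\]
so that $|E|=0$ forces $|\mu^{-1}(E)|=0$. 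Thus $u^*$ is continuous, locally of bounded variation, and enjoys the Luzin $N$ property, so by the Banach--Zaretsky theorem it is absolutely continuous on every compact subinterval of $[0,+\infty)$. The identity $(u^*)'(s)=1/\mu'(u^*(s))=-1/\nu(u^*(s))$ then follows at a.e.\ $s$ from the standard inverse-function rule applied at those points where $t_0:=u^*(s)$ satisfies $\mu'(t_0)\neq 0$; the complement of this set in $[0,+\infty)$ is $\mu(N)$, where $N$ is the null set of $t$ at which $\mu'$ fails to exist or vanishes, and $\mu(N)$ is null by the Luzin $N$ property of the AC function $\mu$. The only delicate point is bookkeeping the two uses of Sard's lemma (inside the coarea formula, and to secure $\nu>0$ a.e.), both of which rely on the real analyticity of $u$.
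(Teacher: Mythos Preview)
Your proof is correct and follows essentially the same route as the paper: coarea formula to obtain the integral representation of $\mu$ (hence absolute continuity and \eqref{dermu}), positivity of the inner integral for a.e.\ level, then the inverse-function rule together with the Luzin~$N$ property to pass to $u^*$. The only cosmetic differences are that the paper integrates over $(t,\max u)$ rather than $(a,b)$, invokes the isoperimetric inequality (rather than Sard) to ensure $\mathcal H^1(\{u=t\})>0$, and establishes Luzin~$N$ for $u^*$ directly from a.e.\ differentiability instead of citing Banach--Zaretsky.
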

These properties of $\mu$ and $u^*$ are essentially well known to the specialists in
rearrangement theory, and follow e.g. from the general results
of \cite{almgren-lieb,BZ}, which are valid within the framework of $W^{1,p}$ functions
(see also \cite{cianchi} for the framework of $BV$ functions, in particular Lemmas 3.1 and
3.2). We point out, however, that 
of these properties only the absolute continuity of $u^*$ is valid in general, 
 while the others
strongly depend on \eqref{cszm} which, in the terminology of \cite{almgren-lieb},
 implies that $u$ is \emph{coarea regular} in a very strong sense, since  it
 rules out the possibility of a singular part in the (negative) Radon measure $\mu'(t)$ and, 
 at the same time, it
 guarantees that the density of the absolutely continuous part is given (only) by the right-hand side of
 \eqref{dermu}. As clearly explained in the excellent Introduction to \cite{almgren-lieb},
 there are several subtleties related to the structure of the distributional derivative of
 $\mu(t)$ (which ultimately make the validity of \eqref{deru*} highly nontrivial), and in fact
 the seminal paper \cite{BZ}  was motivated by a subtle error in a previous work, whose
 fixing since \cite{BZ} has stimulated a lot of original and deep research
 (see e.g. \cite{cianchi,fuscoAnnals} and references therein). 
  
 However, since unfortunately we were not able to find a ready-to-use reference for \eqref{deru*}
 (and, moreover, our $u$ is very smooth but strictly speaking
 it does not belong to $W^{1,1}(\bR^2)$, which would require to fix a lot of details
 when referring to the general results from \cite{almgren-lieb,BZ,cianchi}), here we present
 an elementary and self-contained proof of this lemma, specializing to our case a general
 argument from \cite{BZ} based on the coarea formula.

\begin{proof}[Proof of Lemma \ref{lemmau*}]
The fact that $u$ is locally Lipschitz guarantees the
validity of  the coarea formula (see e.g. \cite{BZ,evans}), that is,
for 
every Borel function $h:\bR^2\to [0,+\infty]$ we have
\[
\int_{\bR^2} h(z) |\nabla u(z)|\,dz = 
\int_0^{\max u} \left( \int_{\{u=\tau\}} h \dH\right)\,d\tau,
\]
where ${\mathcal H}^1$ denotes the one-dimensional Hausdorff measure (and with the usual
convention that $0\cdot \infty=0$ in the first integral). In particular, when
$h(z)=\chi_{A_t}(z) |\nabla u(z)|^{-1}$ (where $|\nabla u(z)|^{-1}$ is meant as $+\infty$
 if $z$ is a critical point of $u$), by virtue of \eqref{cszm} the function $h(z)|\nabla u(z)|$
coincides with $\chi_{A_t}(z)$ a.e., and recalling \eqref{defmu} 
 one obtains
\begin{equation}
\label{rappmu}
\mu(t)=\int_t^{\max u} \left( \int_{\{u=\tau\}} |\nabla u|^{-1} \dH 
\right)\,d\tau\qquad\forall t\in [0,\max u];
\end{equation}
therefore we see that $\mu(t)$ is \emph{absolutely continuous}
on the compact subintervals of $(0,\max u]$, and 
\eqref{dermu} follows.

Now let $D\subseteq (0,\max u)$ denote the set where $\mu'(t)$ exists, coincides with the
integral in \eqref{dermu}
and is strictly positive, and let $D_0=(0,\max u]\setminus D$. By \eqref{dermu} and 
the absolute continuity of $\mu$,
and since the integral in \eqref{dermu} is strictly positive for \emph{every} $t\in (0,\max u)$
(note that  ${\mathcal H}^1(\{u=t\})>0$ for every $t\in (0,\max u)$, otherwise we would have
that $|\{u>t\}|=0$ by the isoperimetric inequality),
we infer that $|D_0|=0$, so that letting
$\widehat D=\mu(D)$ and $\widehat D_0=\mu(D_0)$, one has $|\widehat D_0|=0$ by the absolute
continuity of $\mu$, and $\widehat D=[0,+\infty)\setminus \widehat D_0$ since $\mu$ is invertible. On the other hand, by \eqref{defu*} and elementary calculus,
we see that $(u^*)'(s)$ exists for \emph{every} $s\in \widehat{D}$ and
\[
-(u^*)'(s)=\frac{-1}{\mu'(\mu^{-1}(s))} = 
\left(\int_{\{u=u^*(s)\}} |\nabla u|^{-1} \dH\right)^{-1} \qquad\forall s\in\widehat D,
\]
which implies \eqref{deru*} since $|\widehat D_0|=0$. Finally, since $u^*$ is
differentiable \emph{everywhere} on $\widehat D$, it is well known that
$u^*$ maps every negligible set
$N\subset \widehat D$ into a negligible set. Since $\widehat D\cup \widehat D_0=[0,+\infty)$,
and moreover $u^*(\widehat D_0)=D_0$
where $|D_0|=0$, we see that $u^*$ maps negligible sets into negligible sets, hence it is
absolutely continuous on every compact interval $[0,a]$.
\end{proof} 
The following estimate for the integral in \eqref{deru*},
which can be of some interest in itself, will be the main ingredient in the proof of
Theorem \ref{thm36}.
\begin{proposition}\label{prop34}
We have
\begin{equation}
\label{eq4}
\left(\int_{\{u=u^*(s)\}} |\nabla u|^{-1} \dH\right)^{-1}
\leq u^*(s)\qquad\text{for a.e. $s>0$,}
\end{equation}
and hence
\begin{equation}
\label{stimaderu*}
(u^*)'(s)+ u^*(s)\geq 0\quad\text{for a.e. $s\geq 0$.}
\end{equation}
\end{proposition}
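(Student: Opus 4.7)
The key inequality to establish is \eqref{eq4}; the second assertion \eqref{stimaderu*} then follows immediately by combining \eqref{eq4} with formula \eqref{deru*} from Lemma \ref{lemmau*}. My plan is to extract information from the fact that $F$ is holomorphic: writing $\log u = 2\log|F|-\pi|z|^2$, and using that $\log|F|$ is harmonic where $F\neq 0$ together with $\Delta|z|^2=4$, I get that $\Delta\log u=-4\pi$ pointwise on the open set $\{F\neq 0\}$. Since the zeros of $F$ are isolated, they are all contained in $\{u=0\}$ and hence do not belong to any super-level set $\{u>t\}$ with $t>0$.

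Next, for a.e. $s>0$, set $t=u^*(s)\in(0,\max u)$; by \eqref{boundaryAt} and \eqref{cszm}, the set $\{u>t\}$ is a bounded open set with smooth boundary $\{u=t\}$ of finite length, and $|\nabla u|>0$ on $\{u=t\}$. I plan to apply the divergence theorem to the vector field $\nabla\log u$ on $\{u>t\}$. Since $\nabla u$ points into $\{u>t\}$ across the level set, the outward normal is $-\nabla u/|\nabla u|$, so $\nabla\log u\cdot\nu=-|\nabla u|/u=-|\nabla\log u|$ on $\{u=t\}$. This yields the fundamental identity
\begin{equation}\label{planid}
\int_{\{u=t\}}|\nabla\log u|\dH=4\pi\,\mu(t).
\end{equation}

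Now I combine \eqref{planid} with two classical ingredients. The isoperimetric inequality applied to $\{u>t\}$ gives
\[
{\mathcal H}^1(\{u=t\})^2\geq 4\pi\,\mu(t),
\]
while Cauchy--Schwarz, with the factorization $1=|\nabla\log u|^{1/2}\cdot|\nabla\log u|^{-1/2}$, yields
\[
{\mathcal H}^1(\{u=t\})^2\leq\Bigl(\int_{\{u=t\}}|\nabla\log u|\dH\Bigr)\Bigl(\int_{\{u=t\}}|\nabla\log u|^{-1}\dH\Bigr).
\]
Dividing the second by the first and using \eqref{planid}, I obtain $\int_{\{u=t\}}|\nabla\log u|^{-1}\dH\geq 1$. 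Finally, since $|\nabla u|=u\,|\nabla\log u|=t\,|\nabla\log u|$ on $\{u=t\}$, this rewrites exactly as
\[
\int_{\{u=t\}}|\nabla u|^{-1}\dH\geq \frac{1}{t}=\frac{1}{u^*(s)},
\]
which is \eqref{eq4}.

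The only delicate point I foresee is the justification of the divergence theorem for $\nabla\log u$ on $\{u>t\}$: although $\log u$ blows down to $-\infty$ at the (isolated) zeros of $F$, these points lie in $\{u=0\}$ and are therefore at positive distance from the compact set $\overline{\{u>t\}}$ when $t>0$, so $\log u$ is actually smooth in a neighbourhood of $\overline{\{u>t\}}$ and the integration by parts is classical. The smoothness of the boundary $\{u=t\}$ for a.e. $t$ is already guaranteed by \eqref{boundaryAt} and \eqref{cszm}, and restricting to such $t=u^*(s)$, which covers a.e. $s>0$ by the continuity of $\mu$, is enough for the statement.
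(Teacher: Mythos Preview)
Your proof is correct and follows essentially the same route as the paper: the same three ingredients (Cauchy--Schwarz on the level set, the planar isoperimetric inequality, and the divergence identity coming from $\Delta\log u=-4\pi$) are combined in the same way, the only cosmetic difference being that you work with $|\nabla\log u|$ directly instead of writing everything with $|\nabla u|$ and dividing by $t$ afterwards. Your explicit justification that the zeros of $F$ stay away from $\overline{\{u>t\}}$ (so that the divergence theorem applies classically) is a useful detail that the paper leaves implicit.
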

\begin{proof}
Letting for simplicity $t=u^*(s)$ and recalling that, 
for a.e. $t\in (0,\max u)$
(or, equivalently, for a.e. $s>0$, since $u^*$ and its inverse $\mu$
are absolutely continuous on compact sets) the super-level set $A_t$ in \eqref{defAt}
 has a smooth
boundary  as in \eqref{boundaryAt},
we can combine the Cauchy-Schwarz inequality
\begin{equation}
\label{CS}
 {\mathcal H}^1(\{u=t\})^2 \leq
 \left(\int_{\{u=t\}} |\nabla u|^{-1} \dH\right)
\int_{\{u=t\}} |\nabla u| \dH
\end{equation}
with the isoperimetric inequality in the plane
\begin{equation}
\label{isop}
4\pi \,|\{ u > t \}|\leq 
 {\mathcal H}^1(\{u=t\})^2 
 \end{equation}
 to obtain, after division by $t$,
\begin{equation}
\label{eq3}
t^{-1}
\left(\int_{\{u=t\}} |\nabla u|^{-1} \dH\right)^{-1}
\leq 
\frac{\int_{\{u=t\}} \frac{|\nabla u|}t \dH
}{4\pi \,|\{ u > t \}|}.
\end{equation}
The reason for dividing by $t$ is that, in this form, the right-hand side turns out to be
(quite surprisingly, at least to us) independent of $t$. Indeed, 
since along $\partial A_t=\{u=t\}$ we have $|\nabla u|=-\nabla u\cdot \nu$
where $\nu$ is the outer normal to $\partial A_t$, along $\{u=t\}$ we can interpret
the quotient $|\nabla u|/t$ as $-(\nabla\log u)\cdot\nu$, and hence
\begin{equation*}
\int_{\{u=t\}} \frac{|\nabla u|}t \dH
=-\int_{\partial A_t} (\nabla\log u)\cdot\nu \dH
=-\int_{A_t} \Delta \log u(z)\,dz.
\end{equation*}
But by \eqref{defu}, since $\log |F(z)|$ is a harmonic function, we obtain
\begin{equation}
\label{laplog}
\Delta(\log u(z))=
\Delta(\log |F(z)|^2 +\log e^{-\pi |z|^2})
=\Delta (-\pi |z|^2)=-4\pi,
\end{equation}
so that the last integral equals $4\pi |A_t|$. Plugging this into
\eqref{eq3}, one obtains that the quotient on the right equals $1$, and \eqref{eq4}
follows. Finally, \eqref{stimaderu*} follows on combining \eqref{deru*} with \eqref{eq4}.
\end{proof}

The following lemma establishes a link between the integrals of $u$ on its super-level sets
(which will play a major role in our main argument)
and the function $u^*$.
\begin{lemma}\label{lemma3.3}
The function
\begin{equation}
\label{defI}
I(s)=\int_{\{u > u^*(s)\}} u(z)dz,\qquad s\in [0,+\infty),
\end{equation}
i.e. the integral of $u$ on its (unique) super-level set of measure $s$, 
is of class $C^1$ on $[0,+\infty)$, and
\begin{equation}
\label{derI}
I'(s)=u^*(s)\quad\forall s\geq 0.
\end{equation}
Moreover, $I'$ is (locally) absolutely continuous, and
\begin{equation}
\label{derI2}
I''(s)+I'(s)\geq 0\quad \text{for a.e. $s\geq 0$.}
\end{equation}
\end{lemma}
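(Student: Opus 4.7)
The plan is to reduce the study of $I(s)$ to a one-dimensional computation involving $\mu$ and $u^*$, and then invoke Lemma \ref{lemmau*} and Proposition \ref{prop34}.

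\textbf{Step 1 (layer-cake representation).} First I would write $I(s)$ as a one-dimensional integral. Setting $t=u^*(s)$, so that $\mu(t)=s$ (recall that $\mu$ and $u^*$ are mutually inverse continuous strictly monotone bijections between $(0,\max u]$ and $[0,+\infty)$), Fubini's theorem (layer-cake) gives
\begin{equation*}
I(s)=\int_{\{u>t\}}\!\!u(z)\,dz=\int_0^\infty\!\!|\{z\in A_t:u(z)>\tau\}|\,d\tau=t\,\mu(t)+\int_t^{\max u}\!\!\mu(\tau)\,d\tau,
\end{equation*}
since for $\tau\leq t$ the level set reduces to $A_t$ (of measure $s$), while for $\tau>t$ it reduces to $A_\tau$. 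Substituting back $t=u^*(s)$ one gets the explicit representation
\begin{equation*}
I(s)=s\,u^*(s)+\int_{u^*(s)}^{\max u}\mu(\tau)\,d\tau.
\end{equation*}

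\textbf{Step 2 (differentiating).} By Lemma \ref{lemmau*}, $u^*$ is locally absolutely continuous on $[0,+\infty)$, and $\mu$ is continuous on $(0,\max u]$. Hence the chain/product rule applies a.e.: differentiating the representation of Step 1 and using $\mu(u^*(s))=s$, the two terms involving $(u^*)'(s)$ cancel and one obtains
\begin{equation*}
I'(s)=u^*(s)+s(u^*)'(s)-\mu(u^*(s))(u^*)'(s)=u^*(s)\qquad\text{for a.e. }s\geq 0.
\end{equation*}
Since $u^*$ is continuous everywhere on $[0,+\infty)$ (being the inverse of a continuous strictly decreasing function) and $I$ is locally absolutely continuous (being a sum of products/compositions of such functions), the identity $I'(s)=u^*(s)$ actually holds for \emph{every} $s\geq 0$, and thus $I\in C^1([0,+\infty))$.

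\textbf{Step 3 (second-order inequality).} Since $I'=u^*$, the local absolute continuity of $I'$ is precisely the local absolute continuity of $u^*$ already established in Lemma \ref{lemmau*}. Differentiating once more a.e.\ and applying Proposition \ref{prop34}, namely \eqref{stimaderu*}, gives
\begin{equation*}
I''(s)+I'(s)=(u^*)'(s)+u^*(s)\geq 0\qquad\text{for a.e. }s\geq 0,
\end{equation*}
which is the claimed inequality.

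The only subtle point is the justification of the differentiation in Step 2: one has to be sure that $u^*$ is absolutely continuous (so that the fundamental theorem of calculus applies to $\int_{u^*(s)}^{\max u}\mu(\tau)\,d\tau$ viewed as the composition of an absolutely continuous function with a continuous integrand), and this is exactly what Lemma \ref{lemmau*} provides. Once that is in hand, everything else is a direct bookkeeping that reduces the statement to the pointwise inequality already proven in Proposition \ref{prop34}.
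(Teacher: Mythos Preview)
Your argument is correct, but it differs from the paper's proof in how the identity $I'(s)=u^*(s)$ is established. The paper proceeds by a direct squeeze on the difference quotient: for $h>0$ one writes
\[
I(s+h)-I(s)=\int_{\{u^*(s+h)<u\leq u^*(s)\}} u(z)\,dz,
\]
observes that this set has measure exactly $h$, and hence the quotient lies between $u^*(s+h)$ and $u^*(s)$; the case $h<0$ is symmetric. Continuity of $u^*$ then gives $I'(s)=u^*(s)$ for \emph{every} $s\geq 0$ in one stroke, without any layer-cake formula and without invoking the absolute continuity of $u^*$.

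Your route via the layer-cake identity $I(s)=s\,u^*(s)+\int_{u^*(s)}^{\max u}\mu(\tau)\,d\tau$ is perfectly valid and yields the same conclusion, but it front-loads the dependence on Lemma \ref{lemmau*}: you need the local absolute continuity of $u^*$ already to justify the differentiation in Step~2 (and to upgrade the a.e.\ identity to a pointwise one via the fundamental theorem of calculus). The paper's argument is slightly more economical here, since it only uses continuity of $u^*$ for \eqref{derI} and reserves Lemma \ref{lemmau*} for the second-order inequality \eqref{derI2}. On the other hand, your explicit formula for $I(s)$ is a pleasant byproduct that the paper's proof does not produce. Step~3 is identical in both proofs.
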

\begin{proof}
We have for every $h>0$ and every $s\geq 0$
\[
I(s+h)-I(s)=
\int_{ \{u^*(s+h)< u\leq u^*(s)\}} u(z)dz
\]
and, since by \eqref{defu*} and \eqref{defmu} $|A_{u^*(\sigma)}|=\sigma$,
\[
\left| \{u^*(s+h)< u\leq u^*(s)\}\right| =
|A_{u^*(s+h)}|-|A_{u^*(s)}|=(s+h)-s=h,
\]
we obtain
\[
u^*(s+h) \leq \frac{I(s+h)-I(s)}{h}\leq u^*(s).
\]
Moreover, it is easy to see that the same inequality is true also when $h<0$ (provided $s+h>0$),
now using the reverse set inclusion $A_{u^*(s+h)}\subset A_{u^*(s)}$ according to
the fact that $u^*$ is decreasing. Since $u^*$ is continuous, \eqref{derI} follows letting $h\to 0$
when $s>0$, and letting $h\to 0^+$ when $s=0$.

Finally, by Lemma \ref{lemmau*}, $I'=u^*$ is absolutely continuous on $[0,a]$ for every $a\geq 0$,
$I''=(u^*)'$, and \eqref{derI2} follows from \eqref{stimaderu*}.
\end{proof}

We are now in a position to prove Theorem \ref{thm36}.

\begin{proof}[Proof of Theorem \ref{thm36}]
By homogeneity we can assume $\|F\|_{\cF^2}=1$ so that,
defining $u$ as in \eqref{defu}, \eqref{stimaquoz} is equivalent to 
\begin{equation}
\label{eq1}
\int_\Omega u(z)\,dz \leq 1-e^{-s}
\end{equation}
for every $s\geq 0$ and every $\Omega\subset\bR^2$ such that $|\Omega|=s$.
It is clear that, for any fixed measure $s\geq 0$,
the integral on the left is maximized
when $\Omega$ is the (unique by \eqref{lszm}) super-level set $A_t=\{u>t\}$ 
such that $|A_t|=s$ (i.e. $\mu(t)=s$), and by \eqref{defu*} we see 
that the proper cut level is given by $t=u^*(s)$.
In other words, if $|\Omega|=s$ then
\begin{equation}
\label{eq2}
\int_\Omega u(z)\,dz\leq \int_{A_{u^*(s)}} u(z)\,dz,
\end{equation}
with strict inequality unless $\Omega$ coincides --up to a negligible set-- with $A_{u^*(s)}$
(to see this, it suffices to let $E:=\Omega\cap A_{u^*(s)}$ and observe that, 
if $|\Omega\setminus E|> 0$, then the integral of
$u$ on $\Omega\setminus E$, where $u\leq u^*(s)$, is strictly smaller than the integral of $u$
on $A_{u^*(s)}\setminus E$, where $u> u^*(s)$).
Thus,
to prove \eqref{stimaquoz}
it suffices to prove  \eqref{eq1} when $\Omega=A_{u^*(s)}$, that is,
recalling \eqref{defI}, prove that 
\begin{equation}
\label{ineqI}
I(s)\leq 1-e^{-s}\qquad\forall s\geq 0
\end{equation}
or, equivalently, letting $s=-\log \sigma$, that
\begin{equation}
\label{ineqI2}
G(\sigma):= I(-\log \sigma)\leq 1-\sigma \qquad\forall \sigma\in (0,1].
\end{equation}
Note that
\begin{equation}
\label{v0}
G(1)=I(0)=\int_{\{u>u^*(0)\}} u(z)\,dz = \int_{\{u>\max u\}} u(z)\,dz=0,
\end{equation}
while by monotone convergence, since $\lim_{s\to+\infty} u^*(s)=0$,
\begin{equation}
\label{vinf}
\lim_{\sigma\to 0^+} G(\sigma)=
\lim_{s\to+\infty} I(s)=
\int_{\{u>0\}}\!\!\! u(z)\,dz 
=
\int_{\bR^2} |F(z)|^2 e^{-\pi |z|^2}\,dz=1,
\end{equation}
because we assumed $F$ is normalized. Thus, $G$ extends to a continuous function on $[0,1]$
that coincides with $1-\sigma$ at the endpoints, and \eqref{ineqI2} will follow by proving that $G$
is convex. Indeed, by \eqref{derI2}, the function $e^s I'(s)$ is non decreasing, and since 
$G'(e^{-s})=-e^s I'(s)$, this means that $G'(\sigma)$ is non decreasing as well, i.e. $G$ is convex as claimed.

Summing up, via \eqref{eq2} and \eqref{ineqI}, we have proved that
for every $s\geq 0$
\begin{equation}
\label{sumup}
\begin{split}
&\int_\Omega|F(z)|^2 e^{-\pi|z|^2}\, dz
=\int_\Omega u(z)\,dz \\
\leq &\int_{A_{u^*(s)}} u(z)\,dz=I(s)\leq 1-e^{-s}
\end{split}
\end{equation}
for every $F$ such that $\|F\|_{\cF^2}=1$.

 Now assume 
that equality occurs in \eqref{stimaquoz}, for some $F$ (we may still assume
$\|F\|_{\cF^2}=1$) and
for some set $\Omega$ of measure
$s_0>0$: then, when $s=s_0$, equality occurs everywhere in \eqref{sumup},
i.e. in \eqref{eq2}, whence $\Omega$ coincides with $A_{u^*(s_0)}$ up to a set of measure
zero, and in \eqref{ineqI}, whence $I(s_0)=1-e^{-s_0}$. But then $G(\sigma_0)=1-\sigma_0$ in
\eqref{ineqI2}, where $\sigma_0=e^{-s_0}\in (0,1)$: since $G$ is convex on $[0,1]$, and coincides with
$1-\sigma$ at the endpoints, we infer that $G(\sigma)=1-\sigma$ for every $\sigma\in [0,1]$, or, equivalently,
that $I(s)=1-e^{-s}$ for \emph{every} $s\geq 0$. In particular, $I'(0)=1$; on the other hand,
choosing $s=0$ in \eqref{derI} gives
\[
I'(0)=u^*(0)=\max u,
\]
so that $\max u=1$. But then by \eqref{eq bound}
\begin{equation}
\label{catena}
1=\max u =\max |F(z)|^2 e^{-\pi |z|^2}\leq \|F\|^2_{\cF^2}=1
\end{equation}
and, since equality is attained, by Proposition \ref{pro1} we infer that $F=c F_{z_0}$ for
some $z_0,c\in\bC$. We have already proved that $\Omega=A_{u^*(s_0)}$ (up to a negligible set)
and, since by \eqref{eq Fz0}
\begin{equation}
\label{uradial}
u(z)=|c F_{z_0}(z)|^2 e^{-\pi |z|^2}
=|c|^2 e^{-\pi |z_0|^2} e^{2\pi\realp  (z \overline{z_0})}e^{-\pi |z|^2}=|c|^2 e^{-\pi |z-z_0|^2}
\end{equation}
has radial symmetry about $z_0$ and is radially decreasing, $\Omega$ is (equivalent to) a
ball centered at $z_0$.
This proves the ``only if part" of the final claim being proved. 

The ``if part'' follows by a direct computation.
For, 
assume that $F=c F_{z_0}$ and $\Omega$ is equivalent to a ball of radius $r>0$ centered
at $z_0$. Then
using \eqref{uradial} 
we
can compute, using polar coordinates
\[
\int_\Omega u(z)\,dz=
|c|^2 \int_{\{|z|<r\}} e^{-\pi |z|^2}\,dz =
2\pi |c|^2\int_0^\rho \rho e^{-\pi \rho^2}\,d\rho=|c|^2(1-e^{-\pi r^2}),
\]
and equality occurs in \eqref{stimaquoz} because $\|c F_{z_0}\|_{\cF^2}^2=|c|^2$.
\end{proof}

\begin{remark}
The ``only if part" in the final claim of Theorem \ref{thm36}, once one has established
that $I(s)=1-e^{-s}$ for every $s\geq 0$, instead of using \eqref{catena}, can also be
proved observing that there must be equality, for a.e. $t\in (0,\max u)$, both in
\eqref{CS} and in \eqref{isop} (otherwise there would be a strict inequality in \eqref{stimaderu*},
hence also in \eqref{ineqI},
on a set of positive measure). But then, for at least one value  (in fact, for infinitely many
values) of $t$ we would have that $A_t$ is a ball $B(z_0,r)$ (by the equality in the isoperimetric
estimate \eqref{isop}) and that $|\nabla u|$ is constant along $\partial A_t=\{u=t\}$
(by the equality in \eqref{CS}).

By applying the ``translation'' $U_{z_0}$  (cf.\ \eqref{eq Uz_0} and \eqref{eq transl}) 
we can suppose that the super-level set $A_t=B(z_0,r)$ is centred at the origin, i.e. that $z_0=0$, and
in that case we have to prove that $F$ is constant (so that, translating back to $z_0$, 
one obtains that the original $F$ had the form $c F_{z_0}$). 
Since now both $u$ and $e^{-|z|^2}$ are constant along $\partial A_t=\partial B(0,r)$, 
also $|F|$ is constant there (and does not vanish inside $\overline{B(0,r)}$,
since $u\geq t>0$ there).  Hence $\log|F|$ is constant along $\partial B(0,r)$,
and is harmonic inside $B(0,r)$ since $F$ is holomorphic:
therefore $\log |F|$ is constant in $B(0,r)$, which implies that $F$ is constant over $\bC$. 

Note that the constancy of $|\nabla u|$ along $\partial A_t$ has not been used. However, also this property alone
(even ignoring that $A_t$ is a ball) is enough to conclude. Letting $w=\log u$, one can use that both
$w$ and $|\nabla w|$ are constant
along $\partial A_t$, and moreover $\Delta w=-4\pi$ as shown in \eqref{laplog}: hence every connected
component of $A_t$ must be a ball, by a celebrated result of Serrin \cite{serrin}. Then the previous
argument can be applied to just one connected component of $A_t$, which is a ball,
to conclude that $F$ is constant.
\end{remark}

\section{The multidimensional case}\label{sec mult}
In this Section we provide the generalisation of Theorems \ref{thm mainthm} and
\ref{cor maincor} (in fact, of Theorem \ref{thm36}) in arbitrary dimension. 

We recall that 
the STFT of a function $f\in L^2(\bR^d)$, with a given window $g\in L^2(\bR^d)\setminus\{0\}$, is defined as 
\begin{equation}\label{eq STFT wind}
\cV_g f(x,\omega):=\int_{\bR^d} e^{-2\pi i y\cdot\omega} f(y)\overline{g(y-x)}\, dy,\qquad x,\omega\in\bR^d.
\end{equation}
Consider now the Gaussian function 
\begin{equation}\label{eq gaussian dimd}
\varphi(x)=2^{-d/4}e^{-\pi|x|^2}\qquad x\in\bR^d,
\end{equation} and the corresponding STFT in \eqref{eq STFT wind} with window $g=\varphi$; let us write shortly $\cV=\cV_\varphi$.
Let $\boldsymbol{\omega}_{2d}$ be the measure of the unit ball in $\bR^{2d}$. Recall also the definition of the (lower) incomplete $\gamma$ function as 
\begin{equation}
\label{defgamma}
\gamma(k,s):=\int_0^s \tau^{k-1}e^{-\tau}\, d\tau
\end{equation}
where $k\geq 1$ is an integer and $s\geq 0$, so that
\begin{equation}
\label{propgamma}
\frac{\gamma(k,s)}{(k-1)!}= 1-e^{-s}\sum_{j=0}^{k-1} \frac{s^j}{j!}.
\end{equation}

\begin{theorem}[Faber--Krahn inequality for the STFT in dimension $d$]\label{thm mult} 
For every measurable subset $\Omega\subset\bR^{2d}$ of finite measure and for
every $f\in L^2(\bR^d)\setminus\{0\}$ there holds
\begin{equation}\label{eq thm mult}
\frac{\int_\Omega |\cV f(x,\omega)|^2\, dxd\omega}{\|f\|^2_{L^2}}\leq \frac{\gamma(d,c_\Omega)}{(d-1)!},
\end{equation}
where $c_\Omega:=\pi(|\Omega|/\boldsymbol{\omega}_{2d})^{1/d}$ is
 the symplectic capacity of the ball in $\bR^{2d}$ having the same volume as $\Omega$. 

Moreover,  equality occurs (for some $f$ and for some $\Omega$ such that
$0<|\Omega|<\infty$) if and only if 
$\Omega$ is equivalent, 
up to a set of measure zero, to
a ball centered at some $(x_0,\omega_0)\in\bR^{2d}$, and
\begin{equation}\label{optf-bis}
f(x)=ce^{2\pi ix\cdot\omega_0}\varphi(x-x_0),\qquad c\in\bC\setminus\{0\},
\end{equation}
where $\varphi$ is the Gaussian in \eqref{eq gaussian dimd}.
 \end{theorem}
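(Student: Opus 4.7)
The plan is to run the one-dimensional argument of Theorem \ref{thm36} in arbitrary dimension, using the $d$-dimensional Bargmann transform $\cB:L^2(\bR^d)\to\cF^2(\bC^d)$ and a short upgrade of the key identity $\Delta\log u=-4\pi$ from the proof of Proposition \ref{prop34}. The intertwining formula \eqref{eq STFTbar} carries over to several variables with $z=x+i\omega\in\bC^d$, so that \eqref{eq thm mult} is equivalent to
\begin{equation*}
\int_{\Omega'}|F(z)|^2 e^{-\pi|z|^2}\,dz\leq \frac{\gamma(d,c_\Omega)}{(d-1)!}\|F\|_{\cF^2}^2,\qquad F\in\cF^2(\bC^d).
\end{equation*}
Proposition \ref{pro1} extends verbatim by expanding $F$ on the orthonormal monomial basis $e_\alpha(z)=(\pi^{|\alpha|}/\alpha!)^{1/2}z^\alpha$ of $\cF^2(\bC^d)$ indexed by $\alpha\in\bN^d$, giving both the pointwise bound $u(z):=|F(z)|^2 e^{-\pi|z|^2}\leq \|F\|_{\cF^2}^2$ and the identification of its equality cases. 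Lemma \ref{lemmau*} and Lemma \ref{lemma3.3} are dimension-free (the coarea formula holds in $\bR^{2d}$, $u$ is still real analytic and vanishes at infinity), so the whole machinery of super-level sets, distribution function and decreasing rearrangement is available unchanged.

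The only genuinely new step is the $d$-dimensional analog of Proposition \ref{prop34}. Here the key is that $\log|F|^2$ is \emph{pluriharmonic} on $\{F\neq 0\}$: since $F$ is holomorphic one has $\partial_{\bar z_j}\log F=0$ and $\partial_{z_j}\log\bar F=0$, hence $\partial_{z_j}\partial_{\bar z_j}\log|F|^2=0$ for each $j$, which yields $\Delta_{\bR^{2d}}\log|F|^2=0$. The analog of \eqref{laplog} then reads $\Delta\log u=-\pi\,\Delta|z|^2=-4\pi d$. Chaining Cauchy--Schwarz on $\{u=t\}$, the divergence theorem, and the isoperimetric inequality in $\bR^{2d}$ in the sharp form $\mathcal{H}^{2d-1}(\{u=t\})\geq 2d\,\boldsymbol{\omega}_{2d}^{1/(2d)}|A_t|^{(2d-1)/(2d)}$, a direct calculation entirely parallel to \eqref{CS}--\eqref{eq3} yields, for a.e.\ $s>0$,
\begin{equation*}
(u^*)'(s) + c'(s)\,u^*(s)\geq 0,\qquad c(s):=\pi(s/\boldsymbol{\omega}_{2d})^{1/d},
\end{equation*}
i.e.\ $e^{c(s)}u^*(s)$ is non-decreasing. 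The dimensional constants collapse exactly once one uses $\boldsymbol{\omega}_{2d}=\pi^d/d!$.

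From here I would conclude by the same convexity argument as in \eqref{v0}--\eqref{ineqI}, but with the substitution $\sigma(s):=1-\gamma(d,c(s))/(d-1)!\in[0,1]$, for which $\sigma'(s)=-e^{-c(s)}$ (again by $\boldsymbol{\omega}_{2d}=\pi^d/d!$, which gives $c(s)^{d-1}c'(s)/(d-1)!=1$). Setting $H(\sigma):=I(s(\sigma))$, Lemma \ref{lemma3.3} gives $H(0)=1$, $H(1)=0$, and $H'(\sigma)=-e^{c(s(\sigma))}u^*(s(\sigma))$, non-decreasing by the previous step, so $H$ is convex on $[0,1]$, matches $1-\sigma$ at the endpoints, and therefore $H(\sigma)\leq 1-\sigma$. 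Unwinding gives $I(s)\leq \gamma(d,c(s))/(d-1)!$, and the layer-cake argument of \eqref{eq2} delivers \eqref{eq thm mult}. For equality: $H\equiv 1-\sigma$ forces $\Omega=A_{u^*(|\Omega|)}$ up to null sets and $\max u=\|F\|_{\cF^2}^2$, and the multidimensional Proposition \ref{pro1} then identifies $F$ with a scalar multiple of some $F_{z_0}$; since $u$ is then radially decreasing about $z_0$, $A_{u^*(|\Omega|)}$ is a ball centered at $z_0$, which translates back through the Bargmann correspondence into \eqref{optf-bis}.

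The main technical obstacle I anticipate is verifying that the pluriharmonic identity $\Delta_{\bR^{2d}}\log|F|^2=0$ interacts with the sharper $\bR^{2d}$-isoperimetric constants in exactly the right way: the exponent $(d-1)/d$ produced by the isoperimetric deficit must coincide with the one appearing in $c'(s)$, so that the resulting monotone quantity $e^{c(s)}u^*(s)$ is precisely the one whose extremal value $\gamma(d,c_\Omega)/(d-1)!$ is attained by $F_{z_0}$. This is the one arithmetic miracle to check; everything else is purely dimensional bookkeeping on the scheme already developed in Section \ref{sec proof}.
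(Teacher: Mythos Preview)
Your proposal is correct and follows essentially the same route as the paper's own proof: transfer to $\cF^2(\bC^d)$, use $\Delta\log u=-4\pi d$ together with the $\bR^{2d}$ isoperimetric inequality to obtain $(u^*)'(s)+c'(s)u^*(s)\geq 0$ with $c(s)=\pi(s/\boldsymbol{\omega}_{2d})^{1/d}$, and then run the convexity argument after the substitution $\sigma=1-\gamma(d,c(s))/(d-1)!$. Your explicit verification that $c(s)^{d-1}c'(s)/(d-1)!=1$ via $\boldsymbol{\omega}_{2d}=\pi^d/d!$ (so that $\sigma'(s)=-e^{-c(s)}$) is exactly the ``arithmetic miracle'' needed, and it does work; the paper simply states the analogous monotonicity of $e^{\pi\boldsymbol{\omega}_{2d}^{-1/d}s^{1/d}}I'(s)$ without spelling out this computation.
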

We recall that the symplectic capacity of a ball of radius $r$ in phase space is $\pi r^2$ in every dimension and represents the natural measure of the size of the ball from the point of view of the symplectic geometry \cite{degosson,gromov,hofer}. 
\begin{proof}[Proof of Theorem \ref{thm mult}]
We give only a sketch of the proof, because it follows the same pattern as in dimension $1$. \par
The definition of the Fock space $\cF^2(\bC)$ extends essentially verbatim to $\bC^d$, with the monomials $(\pi^{|\alpha|}/\alpha!)^{1/2}z^\alpha$, $z\in\bC^d$, $\alpha\in\bN^d$ (multi-index notation) as orthonormal basis. The same holds for the definition of the functions $F_{z_0}$ in \eqref{eq Fz0}, now with $z,z_0\in\bC^d$, and Proposition \ref{pro1} extends in the obvious way too. Again one can rewrite the optimization problem in the Fock space $\cF^2(\bC^d)$, the formula \eqref{eq STFTbar} continuing to hold, with $x,\omega\in\bR^d$. Hence we have to prove that 
\begin{equation}
\label{stimaquoz bis}
\frac{\int_\Omega|F(z)|^2 e^{-\pi|z|^2}\, dz}{\|F\|_{\cF^2}^2}
\leq \frac{\gamma(d,c_\Omega)}{(d-1)!}
\end{equation}
for $F\in \cF^2(\bC^d)\setminus\{0\}$ and $\Omega\subset\bC^{d}$ of finite measure, and that equality occurs if and only if $F=c F_{z_0}$ and $\Omega$ is equivalent, up to a set of measure zero, to a ball centered at $z_0$.

To this end, for $F\in \cF^2(\bC^d)\setminus\{0\}$, $\|F\|_{\cF^2}=1$, we set $u(z)=|F(z)|^2 e^{-\pi|z|^2}$, $z\in\bC^d$,
exactly as in \eqref{defu} when $d=1$, and define $A_t$, $\mu(t)$ and $u^*(s)$ as in
Section \ref{sec proof}, replacing $\bR^{2}$ with $\bR^{2d}$ where necessary,
now denoting by $|E|$ the $2d$-dimensional Lebesgue measure  of 
a set $E\subset\bR^{2d}$, in place of the
2-dimensional measure. Note that, now regarding $u$ as a function of $2d$ real variables in $\bR^{2d}$,
properties \eqref{lszm}, \eqref{cszm} etc. are still valid, 
as well as  formulas \eqref{dermu}, \eqref{deru*} etc., provided one
 replaces every occurrence of $\cH^1$ with the $(2d-1)$-dimensional Hausdorff measure $\cH^{2d-1}$.  Following the same pattern as in Proposition \ref{prop34}, now using the isoperimetric inequality in $\bR^{2d}$ (see e.g. \cite{fusco-iso} for an updated account)
\[
\cH^{2d-1}(\{u=t\})^2\geq (2d)^2\boldsymbol{\omega}_{2d}^{1/d}|\{u>t\}|^{(2d-1)/d}
\]
and the fact that $\triangle \log u=-4\pi d$ on $\{u>0\}$, we see that now
$u^\ast$ satisfies the inequality
\[
\left(\int_{\{u=u^*(s)\}} |\nabla u|^{-1} \, d\cH^{2d-1}\right)^{-1}
\leq \pi d^{-1}\boldsymbol{\omega}_{2d}^{-1/d} s^{-1+1/d} u^*(s)\quad\text{for a.e. $s>0$}
\]
in place of \eqref{eq4},
and hence \eqref{stimaderu*} is to be replaced with
\[
(u^*)'(s)+ \pi d^{-1}\boldsymbol{\omega}_{2d}^{-1/d} s^{-1+1/d} u^*(s)\geq 0\quad\text{for a.e. $s> 0$.}
\]
Therefore, with the notation of Lemma \ref{lemma3.3}, $I'(t)$ is locally absolutely continuous on $[0,+\infty)$ and now satisfies
\[
I''(s)+ \pi d^{-1}\boldsymbol{\omega}_{2d}^{-1/d} s^{-1+1/d} I'(s)\geq 0\quad\text{for a.e. $s> 0$.}
\]
This implies that the function $e^{\pi \boldsymbol{\omega}_{2d}^{-1/d} s^{1/d}}I'(s)$ is non decreasing on $[0,+\infty)$.
Then, arguing as in the proof of Theorem \ref{thm36}, we are led to prove  the inequality 
\[
I(s)\leq \frac{\gamma(d,\pi (s/\boldsymbol{\omega}_{2d})^{1/d})}{(d-1)!},\qquad s\geq0
\]
in place of \eqref{ineqI}.
This, with the substitution 
\[
\gamma(d,\pi (s/\boldsymbol{\omega}_{2d})^{1/d})/(d-1)!=1-\sigma,\qquad \sigma\in (0,1]
\]
(recall \eqref{propgamma}), turns into
\[
G(\sigma):=I(s)\leq 1-\sigma\quad \forall\sigma\in(0,1]. 
\]
Again $G$ extends to a continuous function on $[0,1]$, with $G(0)=1$, $G(1)=0$.
At this point one observes that, regarding $\sigma$ as a function of $s$,
\[
G'(\sigma(s))=-d! \pi^{-d}\boldsymbol{\omega}_{2d}  e^{\pi (s/\boldsymbol{\omega}_{2d})^{1/d}}I'(s).
\]
Since the function $e^{\pi (s/\boldsymbol{\omega}_{2d})^{1/d}}I'(s)$ is non decreasing, we see that $G'$ is non increasing on $(0,1]$, hence $G$ is convex on $[0,1]$ and one concludes as in the proof of Theorem \ref{thm36}. Finally, the ``if part" follows from a direct computation, similar to that
at the end of the proof of Theorem \ref{thm36}, now integrating on a ball in dimension $2d$,
and using \eqref{defgamma} to evaluate the resulting integral.
\end{proof}
As a consequence of Theorem \ref{thm mult} we deduce a sharp form of the uncertainty principle for the STFT, which generalises Theorem \ref{cor maincor} to arbitrary dimension. To replace the function
$\log(1/\epsilon)$ in \eqref{eq stima eps} (arising as the inverse function of $e^{-s}$
in the right-hand side 
of \eqref{eq stima 0}), we now denote
by 
$\psi_d(\epsilon)$,  $0<\epsilon\leq1$, the inverse function of 
\[
s\mapsto 1-\frac{\gamma(d,s)}{(d-1)!}=e^{-s}\sum_{j=0}^{d-1} \frac{s^j}{j!},\qquad s\geq 0
\]
(cf. \eqref{propgamma}).
\begin{corollary}\label{cor cor2} If for some $\epsilon\in (0,1)$, some 
 $f\in L^2(\bR^d)\setminus\{0\}$, and some $\Omega\subset\bR^{2d}$ we have
  $\int_\Omega |\cV f(x,\omega)|^2\, dxd\omega\geq (1-\epsilon) \|f\|^2_{L^2}$, then 
\begin{equation}\label{uncertainty dim d}
|\Omega|\geq \boldsymbol{\omega}_{2d}\pi^{-d}\psi_d(\epsilon)^d,
\end{equation}
with equality if and only if $\Omega$ is a ball and $f$ has the form \eqref{optf-bis},
 where $(x_0,\omega_0)$ is the center of the ball.
\end{corollary}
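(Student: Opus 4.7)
The plan is to deduce Corollary \ref{cor cor2} directly from Theorem \ref{thm mult} by inverting the sharp bound, so that essentially no new analytic content is required: everything reduces to checking that the function whose inverse defines $\psi_d$ is strictly monotone.

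First I would apply Theorem \ref{thm mult} to the given $f$ and $\Omega$ and chain the resulting inequality with the standing hypothesis, obtaining
\[
1-\epsilon \;\leq\; \frac{\int_\Omega |\cV f(x,\omega)|^2\,dxd\omega}{\|f\|^2_{L^2}}\;\leq\; \frac{\gamma(d,c_\Omega)}{(d-1)!},\qquad c_\Omega:=\pi\bigl(|\Omega|/\boldsymbol{\omega}_{2d}\bigr)^{1/d}.
\]
Using \eqref{propgamma} this rearranges to $h(c_\Omega)\leq \epsilon$, where
\[
h(s):=e^{-s}\sum_{j=0}^{d-1}\frac{s^j}{j!}
\]
is precisely the function whose inverse is $\psi_d$.

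Next I would verify that $h:[0,+\infty)\to (0,1]$ is a strictly decreasing bijection, so that $\psi_d$ is well defined on $(0,1]$. Termwise differentiation and telescoping give
\[
h'(s)=-e^{-s}\frac{s^{d-1}}{(d-1)!}<0\qquad\text{for $s>0$,}
\]
and since $h(0)=1$ and $h(s)\to 0$ as $s\to+\infty$, the claim follows. Applying $\psi_d$ to $h(c_\Omega)\leq \epsilon$ reverses the inequality to $c_\Omega\geq \psi_d(\epsilon)$, which after solving for $|\Omega|$ is exactly \eqref{uncertainty dim d}.

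For the equality case I would trace back through the chain. If $|\Omega|=\boldsymbol{\omega}_{2d}\pi^{-d}\psi_d(\epsilon)^d$ then $c_\Omega=\psi_d(\epsilon)$, hence $\gamma(d,c_\Omega)/(d-1)!=1-\epsilon$; combined with the hypothesis this forces equality in Theorem \ref{thm mult}, and its equality characterisation yields that $\Omega$ is a ball (necessarily of the stated measure) centred at some $(x_0,\omega_0)$ and that $f$ has the form \eqref{optf-bis}. Conversely, if $\Omega$ is such a ball and $f$ has this form, the direct computation carried out at the end of the proof of Theorem \ref{thm mult} gives $\int_\Omega |\cV f|^2 = (1-\epsilon)\|f\|^2$, which is consistent with the hypothesis and makes $|\Omega|$ coincide with the extremal value. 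There is no genuine obstacle here: all the analytic substance is packed into Theorem \ref{thm mult}, and the corollary is bookkeeping around the monotonicity of $h$.
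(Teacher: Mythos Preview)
Your proof is correct and is precisely how the paper expects the reader to fill in the details: the corollary is stated in the paper without an explicit proof, simply as an immediate consequence of Theorem \ref{thm mult} obtained by inverting the strictly decreasing function $s\mapsto 1-\gamma(d,s)/(d-1)!$. The only trivial remark you could add for completeness is that if $|\Omega|=\infty$ the conclusion \eqref{uncertainty dim d} is automatic (and equality cannot occur), so one may assume $|\Omega|<\infty$ before invoking Theorem \ref{thm mult}.
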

So far, the state-of-the-art in this connection has been represented by the lower bound 
\begin{equation}\label{bound groc dim d}
|\Omega|\geq \sup_{p>2}\,(1-\epsilon)^{p/(p-2)}(p/2)^{2d/(p-2)}
\end{equation}
(which reduces to \eqref{eq statart} when $d=1$, see \cite[Theorem 3.3.3]{grochenig-book}). See Figure \ref{figure1} in the Appendix for a graphical comparison with \eqref{uncertainty dim d} in dimension $d=2$. 
 Figure \ref{figure2} in the Appendix illustrates Theorem \ref{thm mult} and Corollary \ref{cor cor2}.

\begin{remark*} Notice that $\psi_1(\epsilon)=\log(1/\epsilon)$, and $\psi_d(\epsilon)$ is 
increasing with $d$. Moreover, it is easy to check that
\begin{align*}
\psi_d(\epsilon)&\sim (d!)^{1/d}(1-\epsilon)^{1/d},\quad \epsilon\to 1^-\\
\psi_d(\epsilon)&\sim \log(1/\epsilon),\quad \epsilon \to 0^+. 
\end{align*}
On the contrary, the right-hand side of \eqref{bound groc dim d} is bounded by $e^d$; see Figure \ref{figure1} in the Appendix.
\end{remark*}

\section{Some generalizations}\label{sec genaralizations}
In this Section we discuss some generalizations in several directions. 
\subsection{Local Lieb's uncertainty inequality for the STFT} 
An interesting variation on the theme is given by the optimization problem
\begin{equation}\label{eq phip}
\sup_{f\in {L^2(\bR)\setminus\{0\}}}\frac{\int_\Omega |\cV f(x,\omega)|^p\, dxd\omega}{\|f\|^p_{L^2}},
\end{equation}
where $\Omega\subset\bR^2$ is measurable subset of finite measure and $2\leq p<\infty$. Again, we look for the subsets $\Omega$, of prescribed measure, which maximize the above supremum.

Observe, first of all, that by the Cauchy-Schwarz inequality, $\|\cV f\|_{L^\infty}\leq \|f\|_{L^2}$, so that the supremum in \eqref{eq phip} is finite and, in fact, it is attained.
\begin{proposition}\label{pro41}
The supremum in \eqref{eq phip} is attained.
\end{proposition}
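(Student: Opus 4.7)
The plan is to invoke the direct method of the calculus of variations after passing to the Fock space, exploiting the fact that $\cF^2(\bC)$ is a reproducing kernel Hilbert space and that the pointwise bound of Proposition \ref{pro1} holds uniformly for every normalized function. First I would rewrite the problem via the Bargmann transform: using \eqref{eq STFTbar} the ratio in \eqref{eq phip} becomes
\[
\frac{\int_{\Omega'}|F(z)|^p e^{-p\pi|z|^2/2}\, dz}{\|F\|_{\cF^2}^p},
\]
where $F=\cB f\in\cF^2(\bC)$ and $\Omega'=\{(x,\omega):(x,-\omega)\in\Omega\}$ has the same measure as $\Omega$. Denote the supremum by $M$; since the integrand is positive and $|\Omega|>0$ (otherwise there is nothing to prove) we have $M>0$. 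Choose a maximizing sequence $F_n\in\cF^2(\bC)$ with $\|F_n\|_{\cF^2}=1$ and $\int_{\Omega'}|F_n(z)|^p e^{-p\pi|z|^2/2}\, dz\to M$.

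The next step is to extract, by weak compactness of the unit ball of the Hilbert space $\cF^2(\bC)$, a subsequence (not relabelled) with $F_n\rightharpoonup F$ weakly in $\cF^2(\bC)$. Weak lower semicontinuity of the norm gives $\|F\|_{\cF^2}\leq 1$. Since $\cF^2(\bC)$ is a reproducing kernel Hilbert space, point evaluation $F\mapsto F(z)$ is a continuous linear functional for every $z\in\bC$, so weak convergence yields the pointwise limit $F_n(z)\to F(z)$ for every $z\in\bC$, and hence $|F_n(z)|^p e^{-p\pi|z|^2/2}\to |F(z)|^p e^{-p\pi|z|^2/2}$ pointwise.

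The passage to the limit in the integral is then handled by dominated convergence: by Proposition \ref{pro1},
\[
|F_n(z)|^p e^{-p\pi|z|^2/2}=\bigl(|F_n(z)|^2 e^{-\pi|z|^2}\bigr)^{p/2}\leq \|F_n\|_{\cF^2}^p=1 \qquad\forall z\in\bC,
\]
and since $|\Omega'|=|\Omega|<\infty$ the constant function $1$ is integrable on $\Omega'$. Hence
\[
\int_{\Omega'}|F(z)|^p e^{-p\pi|z|^2/2}\, dz=\lim_{n\to\infty}\int_{\Omega'}|F_n(z)|^p e^{-p\pi|z|^2/2}\, dz=M.
\]
In particular $F\neq 0$, since $M>0$. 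Finally, if one had $\|F\|_{\cF^2}<1$, the rescaled function $\widetilde F:=F/\|F\|_{\cF^2}$ would satisfy $\|\widetilde F\|_{\cF^2}=1$ and
\[
\int_{\Omega'}|\widetilde F(z)|^p e^{-p\pi|z|^2/2}\, dz=\|F\|_{\cF^2}^{-p}M>M,
\]
contradicting the definition of $M$; therefore $\|F\|_{\cF^2}=1$ and $F$ is a maximizer, so that $f:=\cB^{-1}F$ realizes the supremum in \eqref{eq phip}.

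I do not expect any serious obstacle here: the pointwise bound from Proposition \ref{pro1} dovetails with the finiteness of $|\Omega|$ to make dominated convergence immediate, and the reproducing kernel property of $\cF^2(\bC)$ automatically upgrades weak convergence to pointwise convergence. The potentially delicate issue --- loss of mass at infinity along the maximizing sequence --- simply cannot occur because the spatial variable is confined to the bounded-measure set $\Omega$, which breaks the translation invariance that would otherwise plague the problem.
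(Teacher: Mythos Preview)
Your proof is correct and follows the same overall strategy as the paper's (the direct method, after transferring to the Fock space and exploiting the pointwise bound of Proposition~\ref{pro1}), but the technical implementation differs in two small respects worth recording. First, for the compactness step the paper uses the uniform pointwise bound to conclude that $(F_n)$ is a normal family and extracts a subsequence converging locally uniformly to an entire $F$ (with $\|F\|_{\cF^2}\leq 1$ by Fatou), whereas you use weak compactness of the unit ball of the Hilbert space $\cF^2(\bC)$ together with the reproducing-kernel property to upgrade weak convergence to pointwise convergence. Second, to pass to the limit in $\int_{\Omega'}|F_n|^p e^{-p\pi|z|^2/2}\,dz$, the paper argues by an $\epsilon$-truncation (choosing a compact $K$ with $|\Omega'\setminus K|<\epsilon$ and using uniform convergence on $K$), while you invoke dominated convergence directly, the dominating function being the constant $1\in L^1(\Omega')$. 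Your route is marginally more streamlined; the paper's is closer to the classical complex-analytic idiom. As a bonus, your final rescaling argument shows that in fact $\|F\|_{\cF^2}=1$, which the paper does not make explicit (it only needs $\|F\|_{\cF^2}\leq 1$ to conclude that the Rayleigh quotient of $F$ is at least $M$).
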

\begin{proof} The desired conclusion follows easily by the direct method of the calculus of variations. We first rewrite the problem in the complex domain via \eqref{eq STFTbar}, as we did in Section \ref{sec sec2}, now ending up with the Rayleigh quotient
\[
\frac{\int_\Omega |F(z)|^p e^{-p\pi|z|^2/2}\, dz}{\|F\|^p_{\cF^2}}
\]
with $F\in \cF^2(\bC)\setminus\{0\}$. It is easy to see that this expression attains a maximum at some $F\in\cF^2(\bC)\setminus\{0\}$. In fact, let $F_n\in \cF^2(\bC)$, $\|F_n\|_{\cF^2}=1$, be a maximizing sequence, and let $u_n(z)= |F_n(z)|^p e^{-p\pi|z|^2/2}$. Since $u_n(z)= (|F_n(z)|^2 e^{-\pi|z|^2})^{p/2}\leq\|F_n\|^{p}_{\cF^2}=1$ by Proposition \ref{pro1}, we see that the sequence $F_n$ is equibounded on the compact subsets of $\bC$. Hence there is a subsequence, that we continue to call $F_n$,  uniformly converging on the compact subsets to a holomorphic function $F$.  By the Fatou theorem, $F\in\cF^2(\bC)$ and $\|F\|_{\cF^2}\leq 1$. Now, since $\Omega$ has finite measure, for every $\epsilon>0$ there exists a compact subset $K\subset\bC$ such that $|\Omega\setminus K|<\epsilon$, so that $\int_{\Omega\setminus K} u_n<\epsilon$ and $\int_{\Omega\setminus K} |F(z)|^p e^{-p\pi|z|^2/2}\, dz<\epsilon$. Together with the already mentioned convergence on the compact subsets, this implies that $\int_{\Omega} u_n(z)\,dz\to \int_{\Omega} |F(z)|^p e^{-p\pi|z|^2/2}\, dz$.  As a consequence, $F\not=0$ and, since $\|F\|_{\cF^2}\leq 1=\|F_n\|_{\cF^2}$,
\[
\lim_{n\to \infty}\frac{\int_\Omega |F_n(z)|^p e^{-p\pi|z|^2/2} }{\|F_n\|^p_{\cF^2}} \leq \frac{ \int_{\Omega} |F(z)|^p e^{-p\pi|z|^2/2}\, dz}{\|F\|^p_{\cF^2}}.
\]
The reverse inequality is obvious, because $F_n$ is a maximizing sequence. 
\end{proof}
\begin{theorem}[Local Lieb's uncertainty inequality for the STFT]\label{thm locallieb} Let $2\leq p<\infty$. For every measurable subset $\Omega\subset\bR^2$ of finite measure, and every $f\in\ L^2(\bR)\setminus\{0\}$,
\begin{equation}\label{eq locallieb}
\frac{\int_\Omega |\cV f(x,\omega)|^p\, dxd\omega}{\|f\|^p_{L^2}}\leq\frac{2}{p}\Big(1-e^{-p|\Omega|/2}\Big).
\end{equation}
Moreover,  equality occurs (for some $f$ and for some $\Omega$ such that
$0<|\Omega|<\infty$) if and only if 
$\Omega$ is equivalent, 
up to a set of measure zero, to
a ball centered at some $(x_0,\omega_0)\in\bR^{2}$, and
\begin{equation*}
f(x)=ce^{2\pi ix \omega_0}\varphi(x-x_0),\qquad c\in\bC\setminus\{0\},
\end{equation*}
where $\varphi$ is the Gaussian in \eqref{defvarphi}.
\end{theorem}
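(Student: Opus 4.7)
The plan is to follow the pattern of the proof of Theorem \ref{thm36}, adapting all the key steps (reduction to super-level sets, differential inequality, convexity) to the exponent $p\ge 2$. The crucial new ingredient, compared with the $L^2$-case, will be the sharp global $L^p$ estimate of Lieb (recast in the Fock space) which is needed to pin down the limit at infinity.

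First, I would transport the problem to $\cF^2(\bC)$ exactly as in Section \ref{sec sec2}: via \eqref{eq STFTbar}, and normalizing $\|F\|_{\cF^2}=1$, inequality \eqref{eq locallieb} is equivalent to
\[
\int_{\Omega}|F(z)|^p e^{-p\pi|z|^2/2}\,dz
=\int_{\Omega}u(z)^{p/2}\,dz\leq \tfrac{2}{p}\bigl(1-e^{-p|\Omega|/2}\bigr),
\]
where $u(z)=|F(z)|^2 e^{-\pi|z|^2}$ is the same function studied in Section \ref{sec proof}. Since $u^{p/2}$ has the same super-level sets as $u$ and every level set of $u$ has measure zero, for any $\Omega$ with $|\Omega|=s$ the left-hand side is strictly maximized, unless $\Omega=A_{u^\ast(s)}$ up to a negligible set, by the ``bathtub'' choice $\Omega=A_{u^\ast(s)}$. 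Hence it suffices to prove the bound for
\[
J(s):=\int_{A_{u^\ast(s)}}u(z)^{p/2}\,dz,\qquad s\geq 0.
\]

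Second, I would establish the pointwise differential inequality. An argument entirely parallel to Lemma \ref{lemma3.3}, replacing $u$ by $u^{p/2}$ in the difference quotient, shows $J\in C^1([0,\infty))$ with $J'(s)=u^\ast(s)^{p/2}$, and that $J'$ is locally absolutely continuous. Therefore $J''(s)=\tfrac{p}{2}u^\ast(s)^{p/2-1}(u^\ast)'(s)$ for a.e.\ $s$, and Proposition \ref{prop34} then yields
\[
J''(s)+\tfrac{p}{2}J'(s)\geq 0 \quad\text{a.e. } s\geq 0,
\]
which is equivalent to $e^{ps/2}J'(s)$ being non-decreasing on $[0,\infty)$.

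Third, I would exploit this monotonicity by the change of variable $\sigma=e^{-ps/2}\in(0,1]$, defining $G(\sigma):=J\bigl(-\tfrac{2}{p}\log\sigma\bigr)$. A direct computation gives $G'(\sigma)=-\tfrac{2}{p}e^{ps/2}J'(s)$, which is non-decreasing in $\sigma$; hence $G$ is convex on $(0,1]$. The boundary value $G(1)=J(0)=0$ is immediate, while by monotone convergence
\[
\lim_{\sigma\to 0^+}G(\sigma)=\lim_{s\to\infty}J(s)=\int_{\bR^2}|F(z)|^p e^{-p\pi|z|^2/2}\,dz\leq \tfrac{2}{p}.
\]
The last inequality is the sharp $L^p$-version of Lieb's uncertainty inequality for the STFT (equivalently, the Carleman-Lieb sharp embedding of $\cF^2(\bC)$ into the weighted $L^p$-space), whose extremals are precisely the functions $cF_{z_0}$ of \eqref{eq Fz0}. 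Thus $G$ extends to a continuous convex function on $[0,1]$ with $G(0)\leq 2/p$ and $G(1)=0$, and convexity gives $G(\sigma)\leq G(0)(1-\sigma)\leq \tfrac{2}{p}(1-\sigma)$, which is the required bound after pulling back.

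Finally, for the equality characterization: if equality holds for some $(F,\Omega)$ with $|\Omega|=s_0>0$, then the reduction to super-level sets forces $\Omega=A_{u^\ast(s_0)}$ up to a null set, and the convexity bound is attained at $\sigma_0=e^{-ps_0/2}\in(0,1)$, i.e.\ $G(\sigma_0)=\tfrac{2}{p}(1-\sigma_0)$. Since $G$ is convex with $G(1)=0$, this is only possible if $G(0)\geq \tfrac{2}{p}$, hence $G(0)=\tfrac{2}{p}$, and the equality case of Lieb's inequality gives $F=cF_{z_0}$. But then, as in \eqref{uradial}, $u(z)=|c|^2 e^{-\pi|z-z_0|^2}$ is radially decreasing about $z_0$, so $A_{u^\ast(s_0)}$ is a ball centred at $z_0$, and pulling back to $f=\cB^{-1}F$ gives the asserted form. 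The reverse ``if'' implication follows by an explicit computation in polar coordinates, as at the end of the proof of Theorem \ref{thm36}. The main obstacle is the clean identification of the boundary value $G(0)$: without the sharp Lieb inequality (and its equality characterization) one would only get a non-sharp upper bound for the line over which $G$ lies. Everything else is a faithful transcription of the Section \ref{sec proof} machinery, since the key inequality $(u^\ast)'+u^\ast\geq 0$ lifts to $J''+\tfrac{p}{2}J'\geq 0$ with no further effort.
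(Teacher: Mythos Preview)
Your proof is correct and follows essentially the same route as the paper's. The only noteworthy difference is in the equality characterization: the paper, having redefined $u(z)=|F(z)|^p e^{-p\pi|z|^2/2}$, argues that equality forces $I(s)=\tfrac{2}{p}(1-e^{-ps/2})$ for every $s$, hence $I'(0)=\max u=1$, and then concludes $F=cF_{z_0}$ via Proposition~\ref{pro1} (so only the \emph{inequality} in Lieb is needed, not its extremals); you instead deduce $G(0)=\tfrac{2}{p}$ and invoke the known equality case of Lieb's inequality. Both are valid, but the paper's argument is slightly more self-contained. Your bookkeeping choice of keeping the original $u$ and writing $J'(s)=u^*(s)^{p/2}$, then lifting $(u^*)'+u^*\ge 0$ to $J''+\tfrac{p}{2}J'\ge 0$, is equivalent to (and arguably cleaner than) the paper's redefinition of $u$ followed by a re-derivation of Proposition~\ref{prop34} with $\Delta\log u=-2\pi p$.
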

Observe that when $p=2$ this result reduces to Theorem \ref{thm mainthm}. Moreover, by monotone convergence, from \eqref{eq locallieb} we obtain
\begin{equation}\label{eq liebineq}
\int_{\bR^2} |\cV f(x,\omega)|^p\, dxd\omega\leq \frac{2}{p}\|f\|^p_{L^2}, \quad f\in L^2(\bR),
\end{equation}
which is Lieb's inequality for the STFT with a Gaussian window (see \cite{lieb} and \cite[Theorem 3.3.2]{grochenig-book}). Actually, \eqref{eq liebineq} will be an ingredient of the proof of Theorem \ref{thm locallieb}.
\begin{proof}[Proof of Theorem \ref{thm locallieb}]  Transfering the problem in the Fock space $\cF^2(\bC)$, it is sufficient to prove that 
\[
\frac{\int_\Omega |F(z)|^p e^{-p\pi|z|^2/2}\, dz}{\|F\|^p_{\cF^2}}\leq \frac{2}{p}\Big(1-e^{-p|\Omega|/2}\Big)
\]
for $F\in \cF^2(\bC)\setminus\{0\}$, $0<|\Omega|<\infty$, 
and that the extremals are given by the functions $F=cF_{z_0}$ in \eqref{eq Fz0}, together with the balls $\Omega$ of center $z_0$. We give only a sketch of the proof, since the argument is similar to the proof of Theorem \ref{thm36}.  \par
Assuming $\|F\|_{\cF^2}=1$ and setting $
u(z)= |F(z)|^p e^{-p\pi|z|^2/2}
$,
arguing as in the proof of Proposition \ref{prop34} we obtain that
\[
\left(\int_{\{u=u^*(s)\}} |\nabla u|^{-1} \dH\right)^{-1}
\leq \frac{p}{2}u^*(s)\qquad\text{for a.e. $s>0$,}
\]
which implies $(u^*)'(s)+ \frac{p}{2} u^*(s)\geq 0$ for a.e.\ $s\geq 0$. With the notation of Lemma \ref{lemma3.3} we obtain $I''(s)+ \frac{p}{2} I'(s)\geq 0$ for a.e.\ $s\geq 0$, i.e. $e^{sp/2}I'(s)$ is non decreasing on $[0,+\infty)$. Arguing as in the proof of Theorem \ref{thm36} we reduce ourselves to study the inequality $I(s)\leq \frac{2}{p}(1-e^{-ps/2})$ or equivalently, changing variable $s= -\frac{2}{p}\log \sigma$, $\sigma\in (0,1]$,
\begin{equation}\label{eq gsigma2} 
G(\sigma):=I\Big(-\frac{2}{p}\log \sigma\Big)\leq \frac{2}{p}(1-\sigma)\qquad \forall\sigma\in (0,1].
\end{equation}
We can prove this inequality and discuss the case of strict inequality as in the proof of Theorem \ref{thm36}, the only difference being that now $G(0):=\lim_{\sigma\to 0^+} G(\sigma)=\int_{\bR^2} u(z)\, dz\leq 2/p$ by \eqref{eq liebineq} (hence, at $\sigma=0$ strict inequality may occur 
in \eqref{eq gsigma2}, but this is enough) and, when in \eqref{eq gsigma2} the equality occurs for some
(and hence for every) $\sigma\in[0,1]$, in place of \eqref{catena} we will have 
\begin{align*}
1=\max u =\max |F(z)|^p e^{-p\pi |z|^2/2}&= (\max |F(z)|^2 e^{-\pi |z|^2})^{p/2} \\
&\leq \|F\|^p_{\cF^2}=1.
\end{align*}
The ``if part" follows by a direct computation. 
\end{proof} 
\subsection{$L^p$-concentration estimates for the STFT}
 We consider now the problem of the concentration in the time-frequency plane
 in the $L^p$ sense, which has interesting applications to signal recovery (see e.g. \cite{abreu2019}). More precisely, Theorem \ref{thm lpconc} below proves a conjecture of
Abreu and Speckbacher \cite[Conjecture 1]{abreu2018} (note that when $p=2$ one obtains Theorem \ref{thm mainthm}).

Let $\cS'(\bR)$ be the space of temperate distributions and, for $p\geq 1$, consider the subspace (called \textit{modulation space} in the literature \cite{grochenig-book}) 
\[
M^p(\bR):=\{f\in\cS'(\bR): \|f\|_{M^p}:=\|\cV f\|_{L^p(\bR^2)}<\infty\}.
\]
In fact, the definition of STFT (with Gaussian or more generally Schwa\-rtz window) and the Bargmann transform $\cB$ extend (in an obvious way) to injective operators on $\cS'(\bR)$. It is clear that $\cV: M^p(\bR)\to L^p(\bR^2)$ is an isometry, and it can be proved that $\cB$ is a \textit{surjective} isometry from $M^p(\bR)$ onto the space $\cF^p(\bC)$ of holomorphic functions $F(z)$ satisfying 
\[
\|F\|_{\cF^p}:=\Big(\int_\bC |F(z)|^p e^{-p\pi|z|^2/2}\, dz\Big)^{1/p}<\infty,
\]
 see e.g. \cite{toft} for a comprehensive discussion. Moreover the formula \eqref{eq STFTbar} continues to hold for $f\in\cS'(\bR)$.

 \begin{theorem}[$L^p$-concentration estimates for the STFT]\label{thm lpconc} Let $1\leq p<\infty$. For every measurable subset $\Omega\subset\bR^2$ of finite measure and every $f\in M^p(\bR)\setminus\{0\}$,
\begin{equation}\label{eq lpconc}
\frac{\int_\Omega |\cV f(x,\omega)|^p\, dxd\omega}{\int_{\bR^2} |\cV f(x,\omega)|^p\, dxd\omega}\leq 1-e^{-p|\Omega|/2}.
\end{equation}
Moreover,  equality occurs (for some $f$ and for some $\Omega$ such that
$0<|\Omega|<\infty$) if and only if 
$\Omega$ is equivalent, 
up to a set of measure zero, to
a ball centered at some $(x_0,\omega_0)\in\bR^{2}$, and
\begin{equation}\label{eq lp concert optimal} 
f(x)=ce^{2\pi ix \omega_0}\varphi(x-x_0),\qquad c\in\bC\setminus\{0\},
\end{equation}
where $\varphi$ is the Gaussian in \eqref{defvarphi}.
\end{theorem}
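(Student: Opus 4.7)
The plan is to transpose the claim via the Bargmann isometry $\cB: M^p(\bR) \to \cF^p(\bC)$ and prove the equivalent Fock-space statement: for every $F \in \cF^p(\bC)\setminus\{0\}$, normalized so that $\|F\|_{\cF^p} = 1$, and every measurable $\Omega \subset \bC$ of finite measure,
\[
\int_\Omega |F(z)|^p e^{-p\pi|z|^2/2}\, dz \leq 1 - e^{-p|\Omega|/2},
\]
with equality if and only if $F = c F_{z_0}$ and $\Omega$ is equivalent to a ball centered at $z_0$. The entire measure-theoretic apparatus of Section \ref{sec proof} (distribution function, decreasing rearrangement, super-level sets $A_t$, and Lemmas \ref{lemmau*} and \ref{lemma3.3}) applies verbatim to $u(z) := |F(z)|^p e^{-p\pi|z|^2/2}$, because $u$ is locally Lipschitz on $\bC$ (at a zero of $F$ of order $k\ge 1$ one has $|F(z)|^p \sim |z-z_0|^{kp}$ with $kp\geq 1$) and real analytic on the complement $\{F\ne 0\}$ of a discrete set; in particular, positive level sets $\{u=t\}$ sit inside $\{F\ne 0\}$, so \eqref{lszm}, \eqref{cszm}, and \eqref{boundaryAt} continue to hold.

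The only substantive change in Proposition \ref{prop34} is the Laplace computation. Since $\log|F|$ is harmonic on $\{F\ne 0\}$,
\[
\Delta \log u = -\tfrac{p\pi}{2}\,\Delta|z|^2 = -2p\pi \quad\text{on } \{u>0\},
\]
which replaces the $-4\pi$ in \eqref{laplog}. Running the same Cauchy--Schwarz and isoperimetric chain then yields
\[
\left(\int_{\{u=u^*(s)\}} |\nabla u|^{-1} \dH\right)^{-1} \leq \tfrac{p}{2}\, u^*(s) \quad\text{for a.e. } s>0,
\]
hence $(u^*)'(s) + \tfrac{p}{2} u^*(s) \geq 0$ a.e., and, with $I(s) := \int_{A_{u^*(s)}}u\,dz$ as in Lemma \ref{lemma3.3}, $I''(s) + \tfrac{p}{2}I'(s) \geq 0$ a.e.; equivalently, $e^{ps/2}I'(s)$ is non-decreasing on $[0,\infty)$. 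Since $\int_\Omega u \leq I(|\Omega|)$ (with strict inequality unless $\Omega = A_{u^*(|\Omega|)}$ up to null sets), it suffices to show $I(s) \leq 1 - e^{-ps/2}$ for every $s\geq 0$. Setting $\sigma = e^{-ps/2}$ and $G(\sigma) := I(-(2/p)\log \sigma)$, the chain rule gives $G'(\sigma) = -(2/p)\,e^{ps/2}I'(s)$, which is non-decreasing in $\sigma$; so $G$ is convex on $(0,1]$. Since $G(1)=I(0)=0$ and $G(0^+)=\lim_{s\to\infty}I(s)=\int_\bC u\,dz = 1$ by the normalization, $G$ lies below the chord $1-\sigma$, proving the main inequality.

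For the equality case, $G(\sigma)\equiv 1 - \sigma$ forces equality a.e.\ in both the Cauchy--Schwarz and isoperimetric steps behind the derivative inequality, so for a.e.\ $t\in (0,\max u)$ the super-level set $A_t$ is a ball and $|\nabla \log u|$ is constant on $\partial A_t$. The Serrin-type argument sketched in the Remark after the proof of Theorem \ref{thm36}, applied to $w=\log u$ (which now satisfies $\Delta w = -2p\pi$), then identifies a disk on which $\log|F|$ is harmonic with constant boundary value, hence constant; constancy of $|F|$ on an open set forces $F$ itself to be constant there, and the identity theorem propagates constancy to all of $\bC$. Un-translating by $U_{z_0}$ yields $F = c F_{z_0}$, while the radial structure of $u$ about $z_0$ identifies $\Omega$ (up to null sets) with a ball centered at $z_0$. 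The converse direction reduces to a one-variable integration over a ball. The only point demanding mild care --- rather than a genuine obstacle --- is verifying that the non-smoothness of $u$ at the isolated zeros of $F$ (which does appear when $p$ is not an even integer) is invisible to the coarea-based machinery; this is automatic, because those points form a discrete set and every positive level set of $u$ avoids them.
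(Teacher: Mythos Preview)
Your proof is correct and follows the same route the paper sketches (the paper omits the proof, saying it is ``very similar to that of Theorem \ref{thm36}''). The one minor difference is in the equality case: the paper's hint singles out the pointwise bound $|F(z)|^p e^{-p\pi|z|^2/2}\leq \tfrac{p}{2}\|F\|_{\cF^p}^p$ (with equality only for $F=cF_{z_0}$), which would plug directly into the analogue of \eqref{catena} once one observes $I'(0)=\max u=p/2$; you instead invoke the Serrin-type alternative from the Remark following Theorem \ref{thm36}, which is equally valid and already endorsed by the paper.
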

We omit the proof, that is very similar to that of Theorem \ref{thm36}.
We just observe that \eqref{eq bound} extends to any $p\in [1,\infty)$ as 
\[
|F(z)|^p e^{-p\pi|z|^2/2}\leq \frac{p}{2} \|F\|^p_{\cF^p}
\] and again the equality occurs at some point $z_0\in\bC$ if and only if $F=cF_{z_0}$, for some $c\in\bC$ (in particular, $\|F_{z_0}\|^p_{\cF^p}=2/p$); see  e.g. \cite[Theorem 2.7]{zhu}.

 
 As a consequence we obtain at once the following sharp uncertainty principle for the STFT.
 \begin{corollary}\label{cor corsez5}
Let $1\leq p<\infty$. If for some $\epsilon\in (0,1)$, some function $f\in M^p(\bR)\setminus\{0\}$ 
 and some $\Omega\subset\bR^2$ we have $\int_\Omega|\cV f(x,\omega)|^p \, dxd\omega\geq (1-\epsilon)\|f\|^p_{M^p}$, then necessarily
\begin{equation}\label{eq stima eps cor}
|\Omega|\geq \frac{2}{p}\log(1/\epsilon),
\end{equation}
with equality if and only if $\Omega$ is a ball and $f$ has the form \eqref{eq lp concert optimal}, where $\varphi$ is the Gaussian in \eqref{defvarphi}
and $(x_0,\omega_0)$ is the center of the ball. 
\end{corollary}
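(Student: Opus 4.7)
The plan is to derive the corollary as a direct consequence of Theorem \ref{thm lpconc}, which already contains all the analytic content; what remains is essentially algebraic manipulation plus a careful tracking of the equality case.

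First I would observe that, since $\cV\colon M^p(\bR)\to L^p(\bR^2)$ is an isometry, $\|f\|_{M^p}^p=\int_{\bR^2}|\cV f(x,\omega)|^p\,dx\,d\omega$. Hence the hypothesis
\[
\int_\Omega|\cV f(x,\omega)|^p\,dx\,d\omega\geq (1-\epsilon)\|f\|_{M^p}^p
\]
can be rewritten as
\[
\frac{\int_\Omega|\cV f(x,\omega)|^p\,dx\,d\omega}{\int_{\bR^2}|\cV f(x,\omega)|^p\,dx\,d\omega}\geq 1-\epsilon.
\]
Combining with the sharp bound $1-e^{-p|\Omega|/2}$ from Theorem \ref{thm lpconc} yields $1-\epsilon\leq 1-e^{-p|\Omega|/2}$, i.e.\ $e^{-p|\Omega|/2}\leq\epsilon$, which upon taking logarithms produces \eqref{eq stima eps cor}. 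Note that if $|\Omega|=+\infty$ the inequality is trivially true, so we may assume $|\Omega|<\infty$ and apply Theorem \ref{thm lpconc} directly.

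For the equality case, suppose $|\Omega|=\tfrac{2}{p}\log(1/\epsilon)$; then tracing the chain of inequalities backwards, one must have $\int_\Omega|\cV f|^p\,dxd\omega=(1-\epsilon)\|f\|_{M^p}^p$ in the hypothesis and simultaneously equality in \eqref{eq lpconc}. Since $|\Omega|>0$ (because $\epsilon<1$) and the ratio is strictly smaller than one (so $\|f\|_{M^p}>0$, i.e.\ $f\neq 0$), the equality clause of Theorem \ref{thm lpconc} applies and forces $\Omega$ to be equivalent to a ball centered at some $(x_0,\omega_0)\in\bR^2$, and $f$ to be of the form \eqref{eq lp concert optimal}. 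Conversely, a direct substitution of such an $f$ and $\Omega$ (a ball centered at the appropriate $(x_0,\omega_0)$) into Theorem \ref{thm lpconc} gives equality in \eqref{eq lpconc}, hence equality in \eqref{eq stima eps cor} as well.

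There is essentially no obstacle here: the only subtlety to guard against is the trivial case where the hypothesis is vacuous or where $|\Omega|$ is infinite, and making sure that the equality analysis indeed reduces exactly to the equality clause already established in Theorem \ref{thm lpconc}, without introducing new extremals.
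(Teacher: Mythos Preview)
Your proposal is correct and matches the paper's approach: the paper presents this corollary as an immediate consequence of Theorem \ref{thm lpconc} (``as a consequence we obtain at once'') without spelling out a proof, and your argument is precisely the algebraic inversion of \eqref{eq lpconc} combined with the equality clause of that theorem.
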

We point out that, in the case $p=1$, the following rougher --but valid for any window in $M^1(\bR)\setminus\{0\}$-- lower bound
\[
|\Omega|\geq 4(1-\epsilon)^2
\]
 was obtained in \cite[Proposition 2.5.2]{grochenig}. Arguing as in Section \ref{sec mult} it would not be difficult to suitably generalise Theorem \ref{thm lpconc} and Corollary \ref{cor corsez5} in arbitrary dimension.

\subsection{Changing window}\label{sec change wind}

Theorem \ref{thm mult} can be suitably reformulated when the Gaussian window $\varphi$ in \eqref{eq gaussian dimd} is dilated or, more generally, replaced by $\mu(\cA)\varphi$, where $\mu(\cA)$ is a metaplectic operator associated with a symplectic matrix $\cA\in Sp(d,\bR)$ (recall that, in dimension 1, $Sp(1,\bR)=SL(2,\bR)$ is the special linear group of $2\times 2$ real matrices with determinant $1$). 

We address to \cite[Section 9.4]{grochenig} for a detailed introduction to the metaplectic representation. Roughly speaking one associates, with any matrix $\cA\in Sp(d,\bR)$, a unitary operator $\mu(\cA)$ on $L^2(\bR^d)$ defined up to a phase factor, providing a projective unitary representation of $Sp(d,\bR)$ on $L^2(\bR^d)$. In more concrete terms, we know that $Sp(d,\bR)$ is generated by matrices of the type (in block-matrix notation) 
\[
\cA_1=\begin{pmatrix}
0 &I \\
-I &0
\end{pmatrix}
\qquad
\cA_2=\begin{pmatrix}
A &0 \\
0 &{A^*}^{-1}
\end{pmatrix}
\qquad
\cA_3=\begin{pmatrix}
I &0 \\
C & I
\end{pmatrix}
\]
where $A\in GL(d,\bR)$, and $C$ is real and symmetric ($I$ denoting the identity matrix). The corresponding operators are then given by $\mu(\cA_1)=\cF$ (Fourier transform), $\mu(\cA_2)f(x)=|{\rm det}\,A|^{-1/2}f(A^{-1}x)$ and $\mu(\cA_3)f(x)=e^{\pi iC x\cdot x} f(x)$ (up to a phase factor). \par
Now, the relevant property of the STFT  is its symplectic covariance (see \cite[Lemma 9.4.3]{grochenig-book}):
\[
|\cV_{\mu(\cA)\varphi} (\mu(\cA)f)(x,\omega)|=|\cV_{\varphi} (f)(\cA^{-1}(x,\omega))|.
\]
As a consequence, if we define, for $g,f\in L^2(\bR^d)\setminus\{0\}$, the quotients
\[
\Phi_{\Omega,g}(f):=\frac{\int_\Omega |\cV_g f(x,\omega)|^2\, dxd\omega}{\int_{\bR^{2d}} |\cV_g f(x,\omega)|^2\, dxd\omega},
\] 
we obtain (since ${\rm det}\,\cA=1$)
\[
\Phi_{\Omega,\mu(\cA)\varphi}(\mu(\cA)f)=\Phi_{\cA^{-1}(\Omega),\varphi}(f).
\]
Hence, since $\cA$ is measure preserving and $\mu(\cA)$ is a unitary operator, we deduce at once from Theorem \ref{thm mult} that for every measurable subset $\Omega\subset\bR^{2d}$ of finite measure and every $f\in L^2(\bR^d)\setminus\{0\}$,
\[
\frac{\int_\Omega |\cV_{\mu(\cA)\varphi} f(x,\omega)|^2\, dxd\omega}{\|f\|_{L^2}^2}\leq  \frac{\gamma(d,c_\Omega)}{(d-1)!}
\]
with $c_\Omega=\pi(|\Omega|/\boldsymbol{\omega}_{2d})^{1/d}$. Moreover, the equality occurs if and only if $f=\mu(\cA)M_{\omega_0}T_{x_0}\varphi$ (recall \eqref{eq gaussian dimd}) and $\Omega$ is equivalent, up to a set of measure zero, to $\cA(B)$ for some ball $B\subset\bR^{2d}$ centered at $(x_0,\omega_0)$,
where  $T_{x_0}f(x)=f(x-x_0)$ and $M_{\omega_0}f(x)=e^{2\pi ix\cdot\omega_0}f(x)$.
\par\bigskip
\nopagebreak
\noindent{\bf Acknowledgements.} We wish to thank Nicola Fusco for useful discussion on the validity of
Lemma \ref{lemmau*}, and for addressing us to some relevant references.

\section*{Appendix}

\begin{figure}[H]
\includegraphics[width=12cm, height = 7cm]{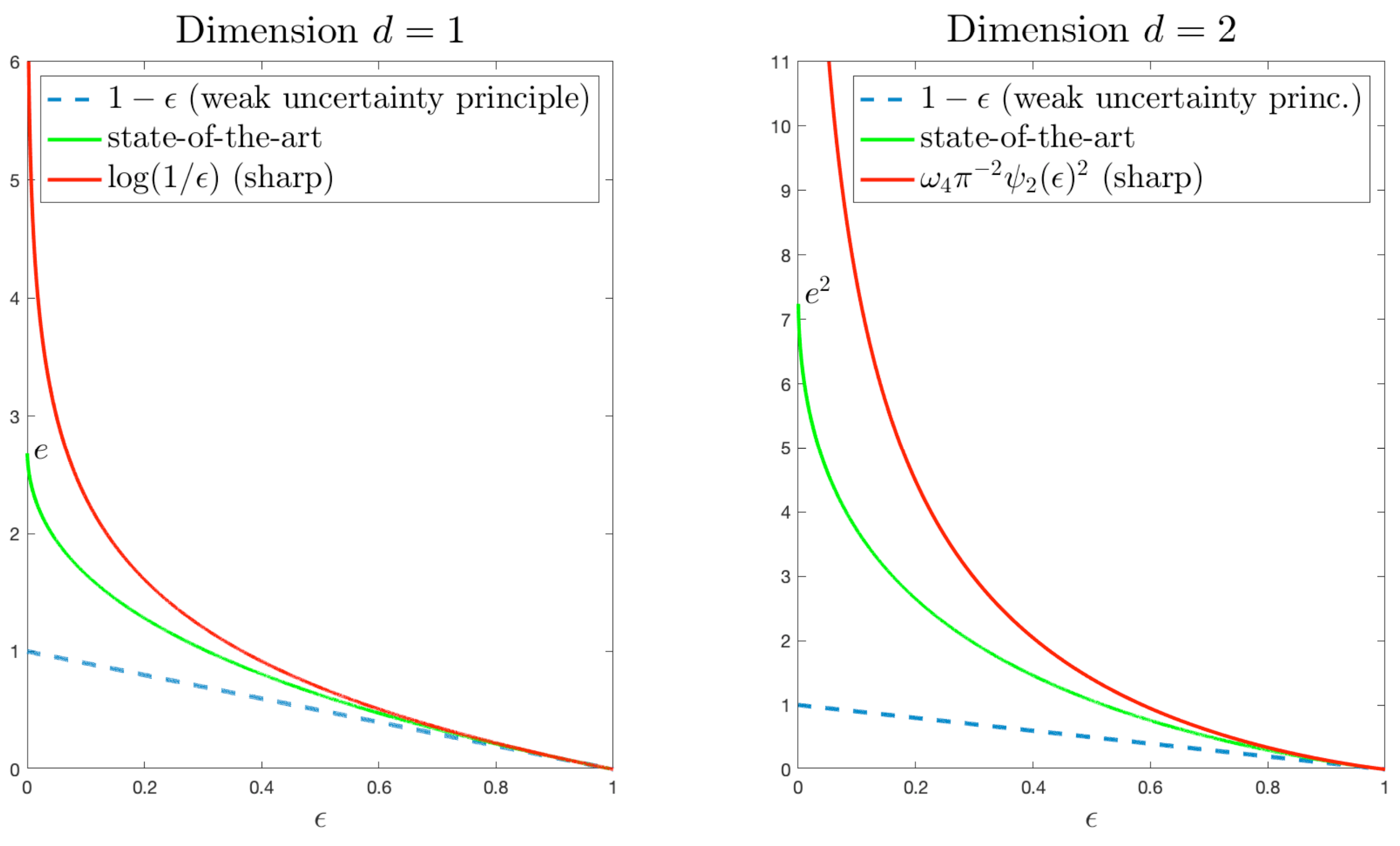}
\caption{
Left: in dimension 1, assuming that $\Omega\subset\bR^2$  captures a fraction $1-\epsilon$ of the energy of some function $f\in L^2(\bR)$, comparison between the lower bound for $|\Omega|$ in \eqref{eq statart} (state-of-the-art), the sharp lower bound $\log(1/\epsilon)$ in \eqref{eq stima eps} and the so-called \textit{weak uncertainty principle} $|\Omega|\geq 1-\epsilon$ \cite[Proposition 3.3.1]{grochenig-book} (which follows at once from the elementary estimate $\|\cV f\|_{L^\infty}\leq\|f\|_{L^2}$).\newline
Right: the same comparison in dimension $d=2$. Here the state-of-the-art is represented by \eqref{bound groc dim d}, whereas the sharp bound $|\Omega|\geq \boldsymbol{\omega}_4 \pi^{-2}\psi_2(\epsilon)^2$ is given in \eqref{uncertainty dim d}  ($\boldsymbol{\omega}_4=\pi^2/2$).}\label{figure1}
\vspace{4mm}
\includegraphics[width=12cm, height = 6.5cm]{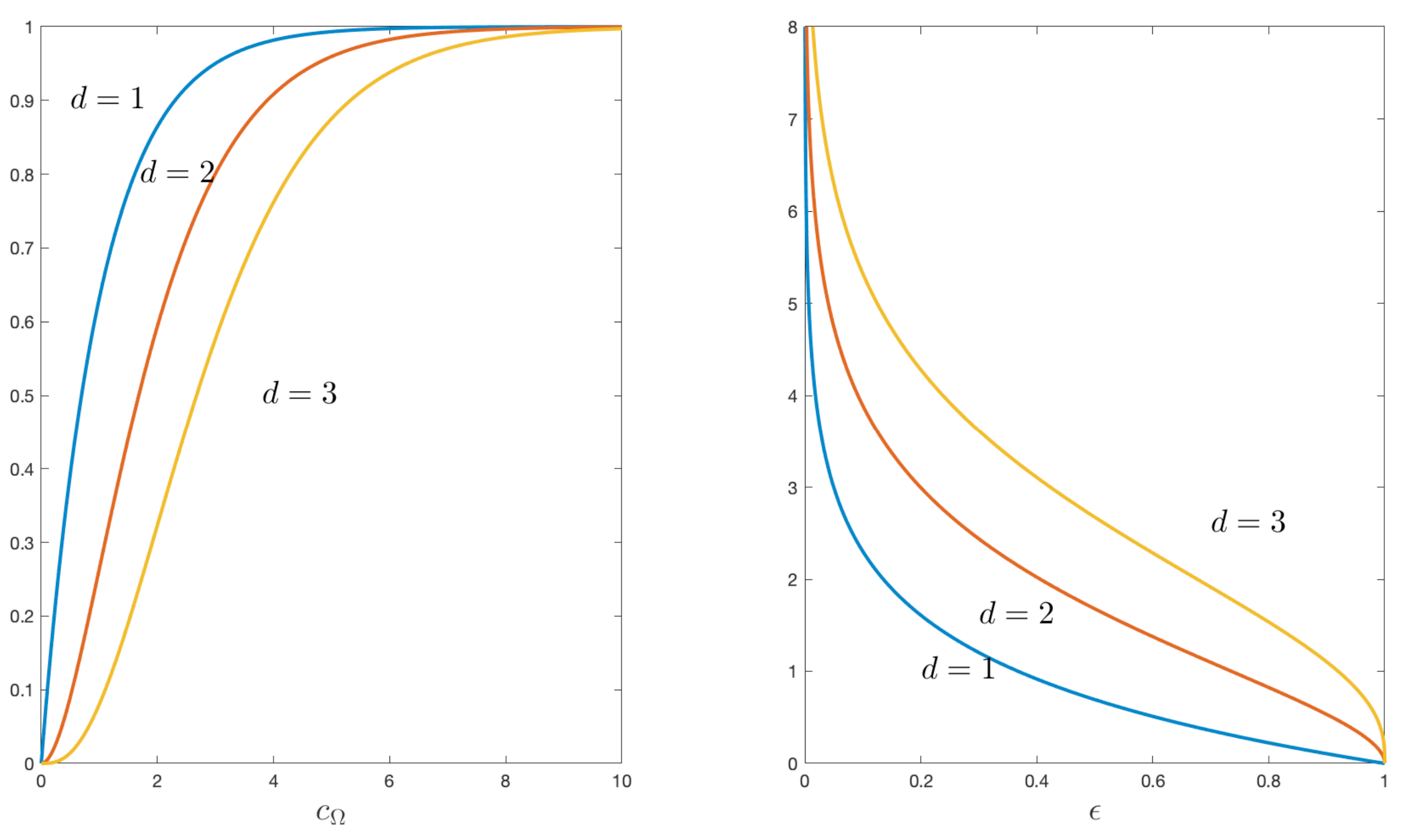}
\caption{Left: The upper bound $\gamma(d,c_\Omega)/(d-1)!$ in \eqref{eq thm mult}, for $d=1,2,3$, as a function of $c_\Omega=\pi(|\Omega|/\boldsymbol{\omega}_{2d})^{1/d}$.\newline
Right: The lower bound $\psi_d(\epsilon)$ for $c_\Omega$ in \eqref{uncertainty dim d}, for $d=1,2,3$. Recall, $\psi_d(\epsilon)$ is the inverse function of $1-\gamma(d,s)/(d-1)!$, in particular $\psi_1(\epsilon)=\log(1/\epsilon)$.} \label{figure2}
\end{figure}


\newpage

\bibliographystyle{abbrv}
\bibliography{biblio.bib}

\end{document}